\newcommand{\subscript}[2]{$#1 _ #2$}
\newcommand{\dd}{\mathrm{d}}
\renewcommand{\Tilde}{\widetilde}
\renewcommand{\Bar}{\overline}
\newcommand{\RR}{\mathbb{R}}
\newcommand{\CC}{\mathbb{C}}
\newcommand{\QQ}{\mathbb{Q}}
\newcommand{\NN}{\mathbb{N}}
\newcommand{\ZZ}{\mathbb{Z}}
\def\R{\mathrm{R}}
\def\Span{\mathrm{Span}}
\newtheorem{theorem}{Theorem}[section]
\newtheorem{prop}[theorem]{Proposition}
\newtheorem{cor}[theorem]{Corollary}
\newtheorem{lemma}[theorem]{Lemma}
\newtheorem{definition}[theorem]{Definition}
\theoremstyle{definition}
\theoremstyle{remark}
\newtheorem{rem}[theorem]{\bf Remark}
\newtheorem{example}[theorem]{\bf Example}
\title{Locally conformally product structures}
\author[B. Flamencourt]{Brice Flamencourt}
\address[Brice Flamencourt]{Institut für Geometrie und Topologie, Fachbereich Mathematik, Universität Stuttgart, Pfaffenwaldring 57, 70569 Stuttgart, Germany
}
\email{brice.flamencourt@igt.uni-stuttgart.de}
\subjclass[2010]{53C29, 53C12, 11R18, 11R04}
\keywords{Conformal geometry, Weyl structure, OT-manifolds, number fields}
\begin{document}

\maketitle

\begin{abstract}
A locally conformally product (LCP) structure on compact manifold $M$ is a conformal structure $c$ together with a closed, non-exact and non-flat Weyl connection $D$ with reducible holonomy. Equivalently, an LCP structure on $M$ is defined by a reducible, non-flat, incomplete Riemannian metric $h_D$ on the universal cover $\Tilde M$ of $M$, with respect to which the fundamental group $\pi_1(M)$ acts by similarities. It was recently proved by Kourganoff that in this case $(\tilde M, h_D)$ is isometric to the Riemannian product of the flat space $\mathbb{R}^q$ and an incomplete irreducible Riemannian manifold $(N,g_N)$. In this paper we show that for every LCP manifold $(M,c,D)$, there exists a metric $g\in c$ such that the Lee form of $D$ with respect to $g$ vanishes on vectors tangent to the distribution on $M$ defined by the flat factor $\mathbb{R}^q$, and use this fact in order to construct new LCP structures from a given one by taking products. We also establish links between LCP manifolds and number field theory, and use them in order to construct large classes of examples, containing all previously known examples of LCP manifolds constructed by Matveev-Nikolayevsky, Kourganoff and Oeljeklaus-Toma (OT-manifolds).
\end{abstract}

\section{Introduction}

On any Riemannian manifold, there exists a unique torsion-free metric connection, called the Levi-Civita connection, which is the basic tool of Riemannian geometry. However, if one consider the slightly more general context of conformal geometry, the uniqueness of compatible connection does not hold anymore.

Conformal structures were introduced in 1919 by Weyl in the third edition of the book {\em Raum, Zeit, Materie} \cite{Weyl}, in an attempt to unify electromagnetism and gravity. He defined conformal classes of Riemannian metrics, and considered the set of torsion-free compatible connections, nowadays called Weyl structures. The fundamental theorem of conformal geometry states that they form an affine space modelled on the space of one-forms.

In general, a Weyl structure does not preserve any metric in the conformal class, even locally. Those which satisfy this property in a neighbourhood of each point are called closed, and those which preserve a global metric are called exact Weyl structures. In this article we are mostly interested in the closed, non-exact Weyl structures on compact conformal manifolds.

The study of closed Weyl structures on a conformal manifold $M$ can be better understood in terms of the universal cover $\Tilde M$. Indeed, the lift of a closed Weyl structure $D$ to $\Tilde M$ is exact, meaning that it is the Levi-Civita connection of a Riemannian metric $h_D$ on $\Tilde M$, uniquely defined up to a constant factor and consistent with the lift of the given conformal structure. The fundamental group of $M$ acts by $h_D$-similarities on $\Tilde M$, all of them being isometries if and only if $D$ is exact.

Every geometrical property of the closed Weyl connection $D$ can be interpreted on the Riemannian manifold $(\Tilde M, h_D)$, and conversely. One natural question to study is the reducibility of the holonomy group of $D$, or equivalently of the Riemannian metric $h_D$.

A first step in this direction was done by Belgun and Moroianu in \cite{BeMo}, where the authors, motivated by a result of Gallot \cite{Gal}, conjectured that a closed non-exact Weyl structure on a compact conformal manifold has reducible holonomy if and only if it is flat. They showed that the conjecture holds under an additional assumption about the lifetime of half-geodesics on the universal cover. However, soon after the formulation of the conjecture, a counter-example was  proposed by Matveev and Nikolayevsky \cite{MN} who constructed a cocompact action by a group of similarities on the Riemannian product of an Euclidean space and an incomplete irreducible Riemannian manifold. Additionally, the same authors proved that this is the only possible type of counter-example in the analytic setting \cite{MN2}.

Recently, Kourganoff extended this result to the smooth setting \cite[Theorem 1.5]{Kou}. More precisely, he proved that if a closed, non-exact Weyl structure $D$ on a compact conformal manifold $(M,c)$ is non-flat and has reducible holonomy, then the Riemannian manifold $(\Tilde M, h_D)$ is isometric to the Riemannian product $\RR^q \times (N, g_N)$ where $\RR^q$ (the {\em flat part}) is an Euclidean space and $(N, g_N)$ (the {\em non-flat part}) is an irreducible, non-complete manifold. In this case, $(M,c,D)$ is called a {\it locally conformally product} structure, or {\it LCP} structure for short. This article is devoted to the study of these particular structures on compact manifolds.

There are up to now only few examples of LCP manifolds. As mentioned before, the first one was given in \cite{MN}, and generalized in \cite[Example 1.6]{Kou} (we outline the construction in Example~\ref{exKou} below). This example is very restrictive because it only provides LCP manifolds of dimension $3$ or $4$, with a flat part of dimension $1$ or $2$ \cite{MMP}. Nevertheless, they are the only examples when the non-flat part is of dimension $2$ \cite[Theorem 1.8]{Kou}.

The other class of example comes from the theory of {\it locally conformally Kähler} (or {\it LCK}) manifolds. A conformal complex manifold is LCK if for any point there exists a metric in the conformal class which is Kähler in a neighbourhood of this point. This is equivalent to the existence of a Kähler metric on the universal cover, which belongs to the lift of the conformal class. In \cite{OT}, Oeljeklaus and Toma constructed a class of complex manifolds called OT-manifolds, some of which admit LCK structures (we recall the construction in Example~\ref{OTex} below) which turn out to be LCP structures. These LCP manifolds have flat parts of dimension $2$, so they are still restrictive examples.

One can define several invariants on LCP manifolds. On the one hand, the dimensions of the flat and the non-flat parts, and on the other hand, the rank of the subgroup of $\RR_+^*$ generated by the similarity ratios of $\pi_1 (M)$ acting on $(\Tilde M, h_D)$, which we call the {\it rank} of the LCP manifold. As noticed before, in the known examples the possibilities for these numbers are limited: the flat part is always of dimension $1$ or $2$, and it is not clear whether or not the rank can be higher than $1$. Our first goal in the present text is to extend the examples of LCP manifolds, and to show, in particular, that the three invariants previously introduced can be chosen arbitrarily large.

Let us now describe the organization of the paper and the results within. In Section~\ref{prelLCP}, we recall the background of Weyl structures and we define LCP manifolds. We also remind some basics about algebraic number fields, which will be needed in the sequel. Indeed, it turns out that the study of LCP manifolds is closely related to number theory, a fact that we can already notice from the previous examples, which involve matrices in $\mathrm{GL}_n (\ZZ)$ \cite{Kou} and algebraic number fields \cite{OT}. The structure theorem for LCP manifold proved by Kourganoff \cite[Theorem 1.9]{Kou}, is also restated. This last article will actually be our main tool, so we will often refer to it in the subsequent lines.

Section~\ref{secpropLCP} is devoted to the proof of several properties of LCP manifolds. First, we prove in Proposition~\ref{flatpartfun} that there exists a metric in the conformal class $c$ on $M$ with respect to which the Lee form of the Weyl structure $D$ vanishes on the flat distribution of $D$. This property is equivalent to the existence of a smooth function defined on the non-flat factor $N$, having the same equivariance as the metric $h_D$ on $\Tilde M$ with respect to the action of $\pi_1(M)$. In turn, the existence of such functions allows us to construct, starting from a given compact LCP manifold $(M,c,D)$, infinitely many new examples, by taking the product of $\Tilde M$ with the universal cover of a compact manifold, endowed with a warped product metric admitting a free cocompact action by similarities. This leads to the concept of reducible LCP manifolds. Moreover, in Proposition~\ref{similratio} we exhibit the link between number theory and LCP structures by proving that the similarity ratios of $\pi_1 (M)$ acting on $(\Tilde M, h_D)$ are always units in some algebraic number field. This particular fact enlighten the construction of the previous examples and their {\it a priori} surprising use of number theory, as well as it gives a direction to construct other LCP manifolds. 

In Section~\ref{SectionExLCP}, we give new examples of LCP manifolds. Extending the ideas of the OT-manifolds, we construct a class of particular manifolds on which we can define LCP structures, such that this class contains all previously introduced examples of LCP manifolds. Using some Galois theory and Dirichlet's unit theorem, we construct LCP manifolds with arbitrary rank (Proposition~\ref{rankLCP}) belonging to this class. We also find LCP manifolds with flat and non-flat part of arbitrarily large dimension, proving that the invariants previously defined can be chosen arbitrarily. In addition, the previous considerations allows us to construct an LCP structure on any OT-manifold by showing that they are a particular case of the class we just defined.

\subsection*{Acknowledgements} The author would like to thank the anonymous reviewer of this article for reading it very carefully and for the suggestions made. He also thanks the referees of his PhD thesis, Vestislav Apostolov and Michal Wrochna, for their remarks for the improvement of the article, as well as his advisor Andrei Moroianu for his help during the redaction.

\section{Preliminaries} \label{prelLCP}

\subsection{Locally conformally product manifolds} Let $M$ be a smooth manifold of dimension $n$ and denote by $\mathrm{Fr} (M)$ its frame bundle. For every $k \in \RR$ we define the weight bundle $\mathcal L^k := \mathrm{Fr} (M) \times_{\vert \mathrm{det} \vert^\frac{k}{n}} \RR$, which is an oriented bundle.

A {\em conformal class} on $M$ is a positive definite section of the fibre bundle $\mathrm{Sym} (T^* M \otimes T^* M) \otimes \mathcal L^2$. The manifold $M$ together with this section is called a conformal manifold. Equivalently, a conformal manifold is given by $M$ and a class of metrics $c$ which are related in the following manner: for any $g, g' \in c$, there is $f : M \to \RR$ such that $g' = e^{2 f} g$.

On a conformal manifold, there is no preferential connection as in the Riemannian case with the Levi-Civita connection, because the metric is defined up to multiplication by a positive function. However, a new class of connections is relevant:

\begin{definition} \label{Weylstructure}
A Weyl structure on a conformal manifold $(M,c)$ is a torsion-free connection $D$ on $TM$ which preserves $c$ i.e. such that for any $g \in c$, there is a $1$-form $\theta_g$ on $M$, called the Lee form of $D$ with respect to $g$, satisfying $D g = -2 \theta_g \otimes g$.
\end{definition}

It comes from the definition that if $\theta_g$ is the Lee form of $D$ with respect to $g \in c$, then for any $g' := e^{2f} g \in c$, the Lee form of $D$ with respect to $g'$ is $\theta_g - d f$. Then, the Lee form of $D$ with respect to $g$ is closed (resp. exact) if and only if the Lee form of any metric in $c$ is closed (resp. exact). For this reason, we introduce the following terminology:

\begin{definition}
A Weyl structure $D$ on a conformal manifold $(M,c)$ is closed (resp. exact) if the Lee form of at least one metric (and then of all metrics) in $c$ is closed (resp. exact).
\end{definition}

An easy consequence of the definition is that a closed Weyl structure is locally the Levi-Civita connection of a metric in $c$, and an exact Weyl structure is the Levi-Civita connection of a metric in $c$.

We recall that a similarity between two Riemannian manifolds $(M_1,g_1)$ and $(M_2,g_2)$ is a diffeomorphism $s : M_1 \to M_2$ such that $s^* g_2 = \lambda^2 g_1$ for some positive real number $\lambda > 0$ called the similarity ratio. In order to define the main object of this text, we need the following definition:

\begin{definition}
A similarity structure on a compact manifold $M$ is a metric $h$ on its universal cover $\Tilde M$ such that $\pi_1(M)$ acts by similarities on $(\Tilde M, h)$. A similarity structure is said to be Riemannian if in addition $\pi_1 (M)$ acts only by isometries.
\end{definition}

It turns out that this notion is closely related to closed Weyl structures. More precisely, we have the following result:

\begin{prop} \label{SimEqD}
On a conformal manifold $(M, c)$ there is a one-to-one correspondence between closed Weyl structures and similarity structures $h$ in the lifted conformal structure $\Tilde c$ on the universal cover, defined up to multiplication by a positive real number. This correspondence takes exact Weyl structures to Riemannian similarity structures.
\end{prop}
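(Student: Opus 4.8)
The plan is to exhibit explicit maps in both directions between closed Weyl structures on $(M,c)$ and $\pi_1(M)$-similarity structures in $\Tilde c$ modulo positive rescaling, and then to verify that they are mutually inverse.

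For the forward map, I would start with a closed Weyl structure $D$ and lift it to a Weyl structure $\Tilde D$ on $(\Tilde M, \Tilde c)$; this lift is again torsion-free and compatible with $\Tilde c$, and it is $\pi_1(M)$-invariant by construction. Fixing any $\Tilde g \in \Tilde c$, its Lee form $\theta$ is closed, and since $\Tilde M$ is simply connected we have $\theta = \rd f$ for a smooth function $f$. Replacing $\Tilde g$ by $h_D := e^{2f} \Tilde g$ makes the Lee form vanish, so $\Tilde D h_D = 0$, i.e. $\Tilde D = \nabla^{h_D}$. Two such parallel metrics in $\Tilde c$ differ by a conformal factor whose differential is the difference of their (vanishing) Lee forms, hence by a positive constant; this is the source of the scalar ambiguity. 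Since each $\gamma \in \pi_1(M)$ preserves both $\Tilde c$ and $\Tilde D$, the pulled-back metric $\gamma^* h_D$ is again a $\Tilde D$-parallel metric in $\Tilde c$, whence $\gamma^* h_D = \lambda_\gamma^2 h_D$ for some $\lambda_\gamma > 0$; thus $h_D$ is a similarity structure.

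For the backward map, given a similarity structure $h$ I would take its Levi-Civita connection $\nabla^h$, which is torsion-free and compatible with $\Tilde c$. Because the Levi-Civita connection is unchanged when the metric is multiplied by a constant, the hypothesis that $\pi_1(M)$ acts by $h$-similarities implies that $\nabla^h$ is $\pi_1(M)$-invariant, hence descends to a Weyl structure $D$ on $(M,c)$. This $D$ is closed since on the universal cover $\Tilde D = \nabla^h$ is the Levi-Civita connection of a global metric, hence locally the Levi-Civita connection of a metric in $c$, a property that passes to $M$. Checking that the two constructions are mutually inverse is then direct: the forward map followed by the backward map returns $D$, because $\nabla^{h_D} = \Tilde D$ descends to $D$; the backward map followed by the forward map returns the class of $h$, since $h$ is itself $\nabla^h$-parallel and lies in $\Tilde c$, so it coincides with $h_D$ up to a positive constant.

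It remains to match exact Weyl structures with Riemannian similarity structures. If $D$ is exact, there is $g \in c$ on $M$ with vanishing Lee form, so $D = \nabla^g$ and the lift of $g$ is a $\pi_1(M)$-invariant metric in the class of $h_D$; hence $\pi_1(M)$ acts by isometries. Conversely, if $\pi_1(M)$ acts by isometries on $(\Tilde M, h_D)$, then $h_D$ descends to a metric $g \in c$ on $M$ with $D = \nabla^g$, so $D$ is exact. The main obstacle I anticipate is the step turning the closed (hence exact, by simple connectedness) Lee form into a globally parallel metric $h_D$ while keeping careful track of the residual constant, since it is precisely this constant that makes the correspondence well defined only up to positive rescaling and that encodes the similarity ratios $\lambda_\gamma$.
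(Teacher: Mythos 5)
Your proof is correct and follows essentially the same route as the paper: lift $D$ to the simply connected cover, integrate the closed Lee form to produce the $\Tilde D$-parallel metric $h_D$ unique up to a positive constant, and run the Levi-Civita construction in reverse. The only (harmless) variation is that you deduce the similarity property from $\gamma$-invariance of $\Tilde D$ together with uniqueness of parallel metrics in $\Tilde c$, where the paper computes $\gamma^* f = f + \ln\lambda$ directly from the equivariance of the Lee form; you are also slightly more thorough in verifying that the two maps are mutually inverse and in proving both directions of the exact/Riemannian correspondence.
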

\begin{proof}
Let $D$ be a closed Weyl structure on $(M,c)$. Let $\Tilde M$ be the universal cover of $M$ and $\Tilde c$ the induced conformal structure on $\Tilde M$. The connection $D$ induces a Weyl structure $\Tilde D$ on $\Tilde M$ which is exact since $\Tilde M$ is simply connected. Thus, there is a metric $h_D \in \Tilde c$, unique up to multiplication by a positive number, such that $\nabla^{h_D} = \Tilde D$, where $\nabla^{h_D}$ is the Levi-Civita connection of $h_D$. If $g \in c$ is a metric on $M$, the induced metric $\Tilde g$ on $\Tilde M$ can be written $\Tilde g = e^{-2f} h_D$ for some real-valued function $f$ of $\Tilde M$, and a simple calculation shows that the Lee form of $\Tilde D$ with respect to $\Tilde g$ is $d f$, which means that the pull-back $\Tilde \theta_g$ of the Lee form $\theta_g$ is equal to $d f$. Now, let $\gamma \in \pi_1(M)$. One has $d f = \Tilde \theta_g = \gamma^* \Tilde \theta_g = \gamma^* d f$, thus there is $\lambda > 0$ such that $\gamma^* f = f + \ln \lambda$ and $\gamma^* h_D = \lambda^2 h_D$. We conclude that the elements of $\pi_1(M)$ act on $(\Tilde M, h_D)$ as similarities. Moreover, if these similarities are all isometries, the Weyl structure $D$ is exact.

Conversely, assume one has a compact manifold $M$ and a metric $h$ on its universal cover $\Tilde M$ such that $\pi_1(M)$ acts by similarities on $(\Tilde M, h)$. Then, the metric $h$ does not define a metric on $M$, but it induces a conformal class $c$, and the Levi-Civita connection $\nabla^h$ descends to a closed Weyl structure on $(M,c)$. If the elements of $\pi_1(M)$ are all isometries, this Weyl structure is exact.
\end{proof}

As we mentioned in the introduction, it was conjectured by Belgun an Moroianu \cite{BeMo} that given a conformal manifold together with a closed, non-exact Weyl structure, the induced connection on the universal cover must be flat or irreducible. A counter-example to this conjecture was found by Matveev and Nikolayevsky \cite{MN}, who showed that in the non-flat, analytic case, the universal cover is a Riemannian product $\RR^q \times N$ where $q \ge 0$ and $N$ is a non-complete, irreducible manifold of dimension at least $2$. This result was extended by Kourganoff to the smooth setting. More precisely, he proved the following theorem \cite[Theorem 1.5]{Kou}:
\begin{theorem} \label{classWeylstruc}
Consider a compact manifold $M$ endowed with a non-Riemannian similarity structure, and its universal cover $\Tilde M$ is equipped with the corresponding Riemannian metric $h_D$ ($D$ being the closed non-exact Weyl structure associated via Proposition~\ref{SimEqD}). Then we are in exactly one of the following situations:
\begin{enumerate}
\item $(\Tilde M, h_D)$ is flat.
\item $(\Tilde M, h_D)$ has irreducible holonomy and $\mathrm{dim} (\Tilde M) \ge 2$.
\item $(\Tilde M, h_D) = \RR^q \times (N, g_N)$, where $q \ge 1$, $\RR^q$ is the Euclidean space, and $(N, g_N)$ is a non-flat, non-complete Riemannian manifold which has irreducible holonomy.
\end{enumerate}
\end{theorem}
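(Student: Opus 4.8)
The plan is to reduce the trichotomy to the de Rham decomposition of $(\Tilde M, h)$ and to extract the extra rigidity from the non-Riemannian similarity action. Since the three cases are mutually exclusive, I may assume that the holonomy representation of $(\Tilde M, h)$ on a tangent space $T_x \Tilde M$ is reducible and that $(\Tilde M, h)$ is not flat; otherwise we are already in case (1) or case (2). The holonomy group then fixes pointwise a maximal subspace $E_0 \subseteq T_x \Tilde M$ (the flat part) and acts irreducibly on complementary summands, giving $T_x \Tilde M = E_0 \oplus E_1 \oplus \cdots \oplus E_k$ with $k \ge 1$. Parallel transport turns each $E_i$ into a parallel, hence integrable and totally geodesic, distribution on $\Tilde M$.

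First I would globalize this infinitesimal splitting to an actual Riemannian product $\Tilde M \cong \RR^q \times N_1 \times \cdots \times N_k$, where $q = \dim E_0$, the factor $\RR^q$ is flat and simply connected, and each $N_i$ is a simply connected manifold with irreducible holonomy. The local de Rham theorem yields such a product near each point, and since $\Tilde M$ is simply connected the local factors can be propagated by parallel transport along paths. The obstacle at this stage is that $h$ need not be complete, so the usual global de Rham theorem does not apply directly; I would handle this by checking that the leaves of each parallel distribution are regular and that the map assembling them is a global diffeomorphism, using the cocompactness of the $\pi_1(M)$-action to rule out pathological boundary behaviour.

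The heart of the argument is to read off the scaling. Any $\gamma \in \pi_1(M)$ acts as a similarity of ratio $\lambda_\gamma$ and therefore permutes the canonical summands $E_1, \dots, E_k$ while fixing $E_0$; after passing to a finite-index subgroup, equivalently to a finite cover of $M$ (which does not alter $(\Tilde M, h)$), I may assume every $\gamma$ stabilizes each factor. Comparing the product metric with its pullback then forces $\gamma$ to act on \emph{each} factor as a homothety of the same ratio $\lambda_\gamma$. Since the structure is non-Riemannian, some $\lambda_\gamma \neq 1$, so every non-flat factor $N_i$ carries a non-isometric homothety. By the classical theorem that a complete Riemannian manifold admitting a non-isometric homothety is flat Euclidean, each irreducible non-flat $N_i$ must be \emph{incomplete}; this is exactly the source of the non-completeness in case (3).

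It remains to show $q \ge 1$ and $k = 1$, and this is where I expect the main difficulty. The point is that a non-isometric homothety with a fixed point endows $\Tilde M$ with the structure of a metric cone, and a metric cone decomposes canonically as the product of a maximal Euclidean factor with a single irreducible cone; the Euclidean factor is precisely $\RR^q$ and the irreducible cone is the non-flat, incomplete, irreducible manifold $N := N_1$. Reducibility of the holonomy together with non-flatness then translates into $q \ge 1$ and into $N$ being non-trivial, while the case of a purely irreducible cone with no Euclidean factor falls into case (2). The technical core is to promote the \emph{local} cone structure produced by the homothety to a \emph{global} one and to control the interaction between the scaling on $\RR^q$ and the cocompact isometric action on the remaining directions; both steps rely on the compactness of $M$ rather than on any completeness of $h$, which is the feature that makes this theorem genuinely harder than the classical de Rham splitting.
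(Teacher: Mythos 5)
This theorem is not proved in the paper at all: it is Kourganoff's structure theorem, quoted verbatim from \cite[Theorem 1.5]{Kou}, and its proof occupies a substantial part of that article. Your sketch must therefore stand on its own, and it has a genuine gap at exactly the two places where the real difficulty lies. First, the globalization of the de Rham splitting. The pointwise holonomy decomposition $T_x\Tilde M = E_0 \oplus E_1 \oplus \cdots \oplus E_k$ and the resulting parallel distributions are fine, but for an \emph{incomplete} metric the global product decomposition can simply fail; the local de Rham theorem plus simple connectedness does not give it. Your remedy --- ``checking that the leaves are regular\dots using the cocompactness of the $\pi_1(M)$-action to rule out pathological boundary behaviour'' --- is not an argument but a restatement of the theorem: extracting a global splitting $\RR^q \times N$ with a \emph{single} irreducible factor from cocompactness of the similarity action is precisely the content of Kourganoff's proof, and nothing in your sketch indicates how to do it.

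Second, the fixed-point/cone step is wrong as stated. A non-isometric similarity $\gamma \in \pi_1(M)$ has \emph{no} fixed point in $\Tilde M$: the deck action is free. The Banach fixed point theorem applies only after passing to the metric completion $C\Tilde M$, and the fixed point then lies on the Cauchy boundary $\partial \Tilde M$ (cf.\ Remark~\ref{CauchyborderLCP}), which is a metric-space point, not a manifold point --- in the LCP case $\partial \Tilde M$ is even infinite. So ``a non-isometric homothety with a fixed point endows $\Tilde M$ with the structure of a metric cone'' cannot get started, and the canonical splitting of a cone into a maximal Euclidean factor times an irreducible cone (a Gallot-type statement) is available only under completeness or regularity hypotheses you do not have. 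This is exactly the obstruction that confined the cone-type argument of Matveev--Nikolayevsky \cite{MN2} to the analytic category; Kourganoff's contribution was to replace it in the smooth setting. The parts of your sketch that do work --- complete reducibility of the (closure of the) holonomy group, the finite-index reduction so that $\pi_1(M)$ stabilizes each factor, a factor-preserving similarity of a product acting on each factor with the same ratio, and the classical fact that a complete manifold with a non-isometric homothety is flat (whence the incompleteness of $N$) --- are the easy reductions; the two steps above are the theorem.
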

In the third case of Theorem~\ref{classWeylstruc}, we say that $M$ is a {\it locally conformally product manifold}, or {\it LCP manifold} for short. Then, a LCP manifold $(M,c,D)$ is the data of a compact manifold, a conformal class, and a closed, non-exact Weyl structure, with reducible, non-flat holonomy.

\begin{rem} \label{CauchyborderLCP}
We recall that the Cauchy border of a Riemannian manifold $\mathcal Z$ is $\partial \mathcal Z := C \mathcal Z \setminus \mathcal Z$, where $C \mathcal Z$ is the metric completion of $\mathcal Z$. The classification of flat similarity structures was done in \cite{Frie}. From this result, it comes that in the first case of Theorem~\ref{classWeylstruc}, the Cauchy border of $\Tilde M$ must be a single point. But this cannot happen in the case of an LCP manifold, because the flat part is a Riemannian factor of $\Tilde M$ and the non-flat part is incomplete, so $\partial \Tilde M$ must have infinite cardinal. A direct consequence of this observation is that on a compact conformal manifold $(M,c)$, a closed, non-exact Weyl structure $D$ defines an LCP structure if and only if $(\Tilde M, h_D)$ (where $\Tilde M$ is the universal cover of $M$, and $h_D$ is the similarity structure induced by $D$) has reducible holonomy and infinite Cauchy border, or equivalently if $(\Tilde M, h_D)$ has a flat Riemannian factor $\RR$.
\end{rem}

We will often write the universal cover of an LCP manifold $(M, c, D)$ as $(\Tilde M, h_D) = \RR^q \times (N, g_N)$. In this case, $\RR^q$ will always stand for the flat part of the de Rham decomposition of $\Tilde M$, $(N, g_N)$ is the non-flat, incomplete, irreducible part, and $h_D$ is the similarity structure induced by $D$, defined up to a constant factor.

We define the following invariant on LCP manifolds:
\begin{definition}
The rank of an LCP manifold $(M,c,D)$ is the rank of the subgroup of $\RR^*_+$ generated by the ratios of the elements of $\pi_1(M)$ viewed as similarities acting on $(\Tilde M, h_D)$.
\end{definition}

Equivalently, the rank of an LCP manifold $(M,c,D)$ is the minimal rank of a subgroup of $H^1(M, \ZZ)$ whose span in $H^1(M, \RR)$ contains the cohomology class $[ \theta ]$ of the Lee form of $D$. To prove this last fact, we recall that there is a canonical isomorphism
\begin{align}
\Xi : H^1 (M, \RR) \to \mathrm{Hom} (\pi_1 (M), \RR), && [w] \mapsto \left( [\gamma] \mapsto \int_\gamma w \right).
\end{align}
This map induces an isomorphism from $H^1 (M, \ZZ)$ to $\mathrm{Hom} (\pi_1 (M), \ZZ)$. In addition, an easy computation shows that $\Xi ([ \theta ])$ is exactly the composition of the logarithm and the morphism associating to an element of $\pi_1 (M)$ its similarity ratio, so the rank of the image of $\Xi ([ \theta ])$ is the rank of the LCP manifold, denoted by $r$. Since $\Xi$ is an isomorphism, it is sufficient to prove that the rank $s$ of the smallest subgroup of $\mathrm{Hom} (\pi_1(M), \ZZ)$ whose span contains $\Xi(\theta)$ is $r$. Since the image of $\Xi (\theta)$ is of rank $r$, there exist $r$ morphisms $p_1, \ldots, p_r \in \mathrm{Hom}(\pi_1(M), \RR)$, whose images are of rank $1$, such that $\Xi (\theta) = \sum_{k = 1}^r p_k$. For all $1 \le k \le r$, there is $a_k \in \RR$ such that $p_k = a_k p_k'$ where $p_k' \in \mathrm{Hom}(\pi_1(M), \ZZ)$. Consequently, $\Xi (\theta) = \sum_{k = 1}^r a_k p_k'$, thus $s \le r$. In addition, $r \le s$ because if $\Xi (\theta) = \sum_{k = 1}^s p_k$ with the $p_k$'s being morphisms with images of rank $1$, then the rank of the image of $\Xi (\theta)$ is smaller than $s$.

A first example of LCP manifold was given by Matveev and Nikolayevsky \cite{MN} and generalized by Kourganoff \cite[Example 1.6]{Kou}. We outline it here:

\begin{example} \label{exKou}
Let $\Tilde M := \RR^{q+1} \times \RR^*_+$ with $q \ge 1$. Let $b$ be a symmetric positive definite bilinear form on $\RR^{q+1}$ and $A \in \mathrm{SL}_{q+1}(\ZZ)$ such that there exist $\lambda \in (0,1)$ and a decomposition $\RR^{q+1} = E^u \perp E^s$ (where the orthogonal symbol refers to the metric induced by $b$) stable by $A$ with $A\vert_{E^s} = \lambda O$ where $O \in O(E^s, b\vert_{E^s})$, and $E^u$ is one-dimensional.

Let $G$ be the group of transformations of $\Tilde M$ generated by the translations $\RR^{q+1} \times \RR^*_+ \ni (x, t) \mapsto (x + e_k, t)$, $k \in \{1, \ldots, q+1 \}$ where $e_k$ is the $k$-th vector of the canonical basis of $\RR^{q+1}$, and the transformation $\RR^{q+1} \times \RR^*_+ \ni (x, t) \mapsto (A x, \lambda t)$.

Let $\varphi : \RR_+^* \to \RR_+^*$ be a function satisfying $\varphi (\lambda t) = \lambda^{2q+2} \varphi (t)$. We define a metric $h$ on $\Tilde M$ by
\[
h_{x,t} := b \vert_{E^s} + \varphi(t) b \vert_{E^u} + d t^2
\]
for any $(x,t) \in \Tilde M$. Then, the metric $h$ defines a similarity structure on the manifold $\Tilde M / G$.
\end{example}

However, as it was pointed out in \cite[Proposition 1]{MMP}, the only admissible values of $q$ in Example~\ref{exKou} are $q = 1,2$, so this construction only provides examples of LCP manifolds of dimension $3$ or $4$.

In the remaining part of this section, $(M, c, D)$ is an LCP manifold, and $(\Tilde M, h_D) = \RR^q \times (N, g_N)$ is its universal cover.

Let $\gamma \in \pi_1(M)$. Since $\gamma$ acts as a similarity on $(\Tilde M, h_D)$, it must preserve the de Rham decomposition, meaning that there is a similarity $\gamma_E$ (for Euclidean) of $\RR^q$ and a similarity $\gamma_N$ of $N$ such that $\gamma = (\gamma_E, \gamma_N)$.

Thus, we introduce the following definitions:
\begin{definition} \label{defP}
We define $P = \lbrace p \in \mathrm{Sim}(N), \exists \gamma \in \pi_1(M), \gamma_N = p \rbrace$, the restriction of $\pi_1(M)$ to the non-flat part $N$. We also introduce $\Bar P$, ${\Bar P}^0$ which are respectively the closure of $P$ in $\mathrm{Sim} (N)$, and the identity connected component of this closure.
\end{definition}
The groups considered in Definition~\ref{defP} were introduced by Kourganoff in \cite{Kou}, and their analysis provides several useful results on LCP manifolds. We will keep these notations throughout this text. From \cite[Lemma 4.1]{Kou} we know that $\Bar{P}^0$ is abelian and by \cite[Lemma 4.13]{Kou} that ${\Bar P}^0$ acts on $N$ by isometries.

There is actually a correspondence between $P$ and $\pi_1(M)$:

\begin{lemma} \label{isomPpi}
The group $P$ is isomorphic to $\pi_1(M)$.
\end{lemma}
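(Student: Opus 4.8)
The plan is to exhibit the obvious candidate for the isomorphism, namely the restriction-to-$N$ map
$\Phi \colon \pi_1(M) \to P$, $\gamma \mapsto \gamma_N$, and to prove it is bijective. Surjectivity is immediate from the definition of $P$, and $\Phi$ is a group homomorphism because the splitting $\gamma = (\gamma_E, \gamma_N)$ coming from the de Rham decomposition $\Tilde M = \RR^q \times N$ is multiplicative. Hence everything reduces to injectivity, i.e.\ to showing that the normal subgroup $K := \ker \Phi = \{\gamma \in \pi_1(M) : \gamma_N = \mathrm{id}_N\}$ is trivial.

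First I would describe an element $\gamma \in K$ with $\gamma \neq \mathrm{id}$. Since a similarity of a Riemannian product restricts to similarities of the \emph{same} ratio on each factor, and $\gamma_N = \mathrm{id}_N$ has ratio $1$, the map $\gamma_E$ is an \emph{isometry} of $\RR^q$; moreover it is fixed-point-free, for a fixed point $x_0$ of $\gamma_E$ would give $\gamma(x_0,n) = (x_0,n)$ for all $n$, contradicting that $\pi_1(M)$ acts freely on $\Tilde M$. Thus $\gamma_E$ has positive minimal displacement $\ell(\gamma_E) := \inf_x |\gamma_E(x) - x| > 0$, attained on an affine axis. Now I exploit non-exactness of $D$: there is $\delta \in \pi_1(M)$ whose ratio $\lambda \neq 1$, and replacing $\delta$ by $\delta^{-1}$ I may assume $\lambda < 1$, so $\delta_E$ is a contracting similarity of $\RR^q$ whose iterates contract onto its unique fixed point $p_0$. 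As $K$ is normal, $g_n := \delta^n \gamma \delta^{-n} \in K$ with flat parts $(g_n)_E = \delta_E^{\,n}\, \gamma_E\, \delta_E^{-n}$. Conjugation by a ratio-$\lambda$ similarity scales minimal displacement by $\lambda$, so $\ell\big((g_n)_E\big) = \lambda^n \ell(\gamma_E) \to 0$; in particular the $(g_n)_E$ are pairwise distinct. Since the axis of $(g_n)_E$ is $\delta_E^{\,n}(\text{axis of }\gamma_E)$, it approaches $p_0$, so for large $n$ there are points near $p_0$ displaced arbitrarily little. Fixing a ball $B$ around $p_0$, this forces $(g_n)_E(B) \cap B \neq \emptyset$ for infinitely many distinct $g_n$. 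Because $\pi_1(M)$, hence $K$, acts properly discontinuously on $\Tilde M$, testing on the compact set $B \times \{n_0\}$ shows the induced action of $K$ on $\RR^q$ through $\gamma \mapsto \gamma_E$ is properly discontinuous — which is incompatible with infinitely many distinct $(g_n)_E$ moving $B$ into itself. The contradiction yields $\gamma = \mathrm{id}$, so $K = 1$ and $\Phi$ is an isomorphism.

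The main obstacle is the final step when $\gamma_E$ has nontrivial rotational part: one cannot argue that $(g_n)_E \to \mathrm{id}$, since the orthogonal parts of the $(g_n)_E$ need not converge to the identity. The device that circumvents this is the conjugation-invariant minimal displacement and its scaling $\ell \mapsto \lambda\,\ell$ under the contraction $\delta_E$, which localizes the axes near $p_0$ and produces the violation of proper discontinuity with no control on the rotational parts whatsoever. A secondary point to check is that proper discontinuity genuinely descends from the action on $\Tilde M$ to the induced action on the flat factor; this holds because $\gamma \mapsto \gamma_E$ is injective on $K$ and the relevant test sets split as products $B \times \{n_0\}$.
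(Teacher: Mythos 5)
Your proof is correct and takes essentially the same route as the paper: the same projection homomorphism $\gamma \mapsto \gamma_N$, with injectivity established by conjugating a hypothetical kernel element by powers of a contracting element of $\pi_1(M)$ and contradicting proper discontinuity of the action near the contraction's fixed point. The paper sidesteps your minimal-displacement/axis machinery simply by evaluating the conjugates $v^k \gamma v^{-k}$ at the fixed point $0$ of $v_E$, where the displacement is $\lambda^k \vert t_\gamma \vert \to 0$ irrespective of the rotational parts, so the orbit of $(0,x)$ accumulates.
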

\begin{proof}
The second projection $\pi_1(M) \to P, \gamma \mapsto \gamma_N$ is a group morphism. We will show that it is an isomorphism. Assume there is $\gamma \in \pi_1(M) \setminus \mathrm{id}$ such that $\gamma_N = \mathrm{id}$, so $\gamma$ is an isometry of $(\Tilde M, h_D)$ since $\gamma$ is a similarity. Let $v \in \pi_1(M)$ whose similarity ratio is $\lambda \in (0,1)$. By the Banach fixed point theorem, $v_E$ has a fixed point, and we can assume without loss of generality that it is $0$. Then, we can find $R_v, R_\gamma \in O_q (\RR)$ and $t_\gamma \in \RR^q$ such that $v_E (a)= \lambda R_v a$ and $\gamma_E (a) = R_\gamma a + t_\gamma$ for any $a \in \RR^q$. Since $\gamma$ cannot have a fixed point, because $\pi_1(M)$ acts freely on $\Tilde M$, one has $t_\gamma \in \mathrm{ker} (R_\gamma - \mathrm{Id}) \setminus \{ 0 \}$.

One has, for any $k \in \NN$ and $(a,x) \in \RR^q \times N$:
\begin{equation}
v^k \gamma v^{-k} (a,x) = (R_v^k R_\gamma R_v^{-k} a + \lambda^k R_v^k t_\gamma, x).
\end{equation}
Since $v^k \gamma v^{-k} (0,x) = (\lambda^k R_v^k t_\gamma, x) \underset{k \to + \infty}{\longrightarrow} (0,x)$, the orbit of $(0,x)$ by $\pi_1(M)$ admits an accumulation point, which contradicts the fact that $\pi_1(M)$ acts properly on $\Tilde M$.
\end{proof}

\subsection{Number theory} We will need a few notions coming from number theory in order to give examples of locally conformally product manifolds having arbitrary high rank.

First, we recall that an algebraic number field $K$, or number field for short, is an extension of $\QQ$ of finite dimension. The degree $[K : \QQ]$ of such an extension is its dimension as $\QQ$-vector space. If $\alpha$ is an algebraic number, we will denote by $\QQ[\alpha]$ the smallest extension of $\QQ$ containing $\alpha$. In this case, the degree of $\alpha$ is the degree of its (monic) minimal polynomial. The conjugates of an algebraic number $\alpha$ are the roots of its minimal polynomial.

\begin{definition}
An algebraic number field is called totally real if all its embeddings in $\CC$ lie in $\RR$.
\end{definition}

Equivalently, a number field $K := \QQ[\alpha]$ is totally real if and only if the minimal polynomial of $\alpha$ has only real roots, i.e. all the conjugates of $\alpha$ are real.

We recall that an extension $K/L$ is a Galois extension if it is normal, meaning that all the conjugates of an element $\alpha \in K$ lie in $K$, and separable, i.e. the minimal polynomial of any $\alpha \in K$ has simple roots in an algebraic closure of $K$. In this case, the Galois group of $K/L$ is the set of automorphisms of $K$ which fixes $L$. When $L = \QQ$, all the algebraic extensions are separable, so for an extension $\QQ[\alpha]$, to be a Galois extension means that all the conjugates of $\alpha$ lie in $\QQ[\alpha]$. These considerations lead us to introduce the following definition:

\begin{definition}
An extension $K/L$ is called cyclic if it is a Galois extension and its Galois group is cyclic.
\end{definition}

One object of interest for our analysis will be the ring of integers of an extension $K$, and more specifically its group of units.

\begin{definition}
An element $\beta$ of an algebraic number field $K$ is an algebraic integer if its monic minimal polynomial is in $\ZZ [X]$. The set of all algebraic integer is called the {\em ring of integers} and is denoted by $\mathcal O_K$.
\end{definition}

One basic result is that the set $\mathcal O_K$ of the algebraic integers in $K$ is a ring. In addition, it is easily seen that for any $x \in K$ there exists $p \in \ZZ$ such that $px \in \mathcal O_K$, implying the equality $\mathrm{dim}_\QQ K = \mathrm{rk} (\mathcal O_K, +)$.

\begin{definition}
The group $\mathcal O_K^\times$ of invertible algebraic integers in $K$ is called the group of units of $K$ and its elements are called units.
\end{definition}

\begin{rem}
A useful characterization of units is the following: an algebraic integer of $K$ is a unit if and only if the constant coefficient of its minimal polynomial in $\ZZ [X]$ is equal to $\pm 1$.
\end{rem}

A fundamental result on the structure of the group of unit is the Dirichlet's units theorem (for a proof, see \cite[Theorem 5.1]{Mil}):

\begin{theorem}[Dirichlet's units theorem] \label{Dirichlet}
The group of units in a number field $K$ is finitely generated with rank equal to $s + t -1$, where $s$ is the number of real embeddings of $K$, and $2t$ is the number of nonreal complex embeddings of $K$ (so $s + 2 t = [K : \QQ]$).
\end{theorem}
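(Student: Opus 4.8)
The plan is to follow the classical geometry-of-numbers argument. Write $n = [K:\QQ] = s + 2t$, and let $\sigma_1, \ldots, \sigma_s : K \to \RR$ be the real embeddings and $\tau_1, \bar\tau_1, \ldots, \tau_t, \bar\tau_t : K \to \CC$ the non-real ones, grouped into conjugate pairs. I would introduce the logarithmic map $L : \mathcal{O}_K^\times \to \RR^{s+t}$ defined by $L(u) = (\log|\sigma_1(u)|, \ldots, \log|\sigma_s(u)|, 2\log|\tau_1(u)|, \ldots, 2\log|\tau_t(u)|)$. Since each $|\sigma_i|$ and $|\tau_j|$ is multiplicative, $L$ is a homomorphism from the multiplicative group $\mathcal{O}_K^\times$ to the additive group $\RR^{s+t}$. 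The first step is to identify its kernel: one has $u \in \ker L$ exactly when all conjugates of $u$ have absolute value $1$. Because $u$ is an algebraic integer, the coefficients of its characteristic polynomial are then symmetric functions of numbers of modulus $1$, hence bounded integers, so there are only finitely many such $u$. Thus $\ker L$ is a finite subgroup of $K^\times$ and therefore consists of roots of unity.

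Next I would locate the image. For a unit $u$, the product of all its conjugates equals the norm $N_{K/\QQ}(u) = \pm 1$, so the sum of the coordinates of $L(u)$ vanishes; hence $L(\mathcal{O}_K^\times)$ is contained in the hyperplane $H := \{x \in \RR^{s+t} : \sum_i x_i = 0\}$, which has dimension $s+t-1$. The same boundedness estimate used for the kernel shows that $L^{-1}(B)$ is finite for every bounded set $B \subset \RR^{s+t}$, so $L(\mathcal{O}_K^\times)$ is a discrete subgroup of $H$, that is, a lattice of rank at most $s+t-1$. Combined with the finiteness of $\ker L$, this already yields that $\mathcal{O}_K^\times$ is finitely generated, with torsion-free rank equal to the rank of the lattice $L(\mathcal{O}_K^\times)$.

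The main difficulty, and the heart of the theorem, is to prove that this lattice has full rank $s+t-1$. Here I would invoke Minkowski's convex body theorem through the canonical embedding $K \hookrightarrow \RR^s \times \CC^t \cong \RR^n$, under which $\mathcal{O}_K$ becomes a full lattice whose covolume is controlled by the discriminant. Fixing the total volume and stretching the symmetric convex region in one archimedean direction while shrinking it in the others, Minkowski's theorem produces nonzero algebraic integers of bounded norm whose conjugate absolute values are prescribed up to bounded factors. Since only finitely many ideals have norm below a fixed bound, among an infinite family obtained this way two elements must generate the same principal ideal, and their quotient is then a unit whose logarithmic image has a chosen sign pattern. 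Carrying this out so as to produce, for each coordinate, a unit that is strictly dominant in that coordinate and controlled in the others, one obtains $s+t-1$ vectors of $H$ which a dominance argument shows to be linearly independent. This proves that $L(\mathcal{O}_K^\times)$ spans $H$, completing the argument. I expect this surjectivity step to be the principal obstacle, since it is precisely where the arithmetic of $K$ enters essentially, via Minkowski's bound and the finiteness of ideals of bounded norm.
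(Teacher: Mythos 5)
Your proposal is correct and is the classical geometry-of-numbers proof: the paper itself gives no proof but defers to \cite[Theorem 5.1]{Mil}, whose argument is exactly the one you sketch (logarithmic embedding, kernel finite hence roots of unity by the boundedness of conjugates, image a discrete subgroup of the trace-zero hyperplane, and full rank $s+t-1$ via Minkowski's theorem, finiteness of ideals of bounded norm, and a dominance/sign-pattern argument for linear independence). No gaps worth flagging beyond the expected compression of the surjectivity step, which you correctly identify as the heart of the matter.
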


In particular, $\mathcal O_K^\times \simeq T \oplus \ZZ^{s + t -1}$, where $T$ is the subgroup of torsion elements in $\mathcal O_K^\times$. When $K$ is totally real, there is no torsion element different from $\pm 1$, and then $\mathcal O_K^\times \simeq \{ \pm 1 \} \oplus \ZZ^{s -1}$.

The last notion that we need concerns the bases of an algebraic number field $K$. More precisely, we are interested in the case where $K$ admits a basis which is adapted to the ring of integers.

\begin{definition}
An algebraic number field $K$ is called monogenic if there exists a power integral basis in $K$, i.e. there is an element $\alpha \in K$ such that $\mathcal O_K = \ZZ [\alpha]$.
\end{definition}

We also recall that the $n$-th cyclotomic extension is the extension of $\QQ$ generated by a primitive $n$-th root of unity. The degree of this extension is the value of the Euler's totient function at $n$.

We now have the tools to construct the so-called OT-manifolds, which were introduced by Oeljeklaus and Toma in \cite{OT}. Before giving the construction, we emphasize that throughout this article, a {\em lattice} in an Abelian Lie group $G$ will be a discrete subgroup $H$ of $G$. If the quotient $G/H$ is compact, then $H$ will be called a {\em full lattice}.

\begin{example}[OT-manifolds] \label{OTex}
Let $K$ by a number field with $s > 0$ real embeddings $\sigma_1, \ldots, \sigma_s$ and $2 t > 0$ complex embeddings $\sigma_{s+1}, \ldots, \sigma_{s+2t}$ such that $\sigma_{s+i}$ and $\sigma_{s+t+i}$ are conjugated for any $1 \le i \le t$ (such a field always exists, see \cite[Remark 1.1]{OT}). We define the geometric representation of $K$
\[
\sigma: K \to \CC^{s+t}, a \mapsto (\sigma_1 (a), \ldots, \sigma_{s+t} (a)).
\]
The image of the ring of integers $\mathcal O_K$ of $K$ by $\sigma$ is a lattice of rank $s + 2t$ in $\CC^{s + t}$. Moreover, we consider
\[
\mathcal O_K^{\times,+} := \{a \in \mathcal O_K^\times, \sigma_i (a)  > 0, 1 \le i \le s \},
\]
and we define an action of this set on $\CC^{s+t}$ by $az := (\sigma_1(a) z_1, \ldots, \sigma_{s+t}(a) z_{s+t})$ for any $a \in \mathcal O_K^{\times,+}$. Let $U$ be a subgroup of $\mathcal O_K^{\times,+}$ such that the image of $U$ by the composition $p_{\RR^s} \circ \ell$ of the logarithmic representation
\begin{equation}
\begin{aligned}
&\ell : \mathcal O_K^{\times,+} \to \RR^{s+t}, \\
&\ell (u) := (\ln \vert\sigma_1(u) \vert, \ldots, \ln \vert \sigma_s(u) \vert, 2 \ln \vert\sigma_{s+1}(u) \vert, \ldots, 2 \ln \vert\sigma_{s+t}(u) \vert)
\end{aligned}
\end{equation}
and the projection $p_{\RR^s} : \RR^{s+t} \to \RR^s$ on the first $s$ coordinates is a full lattice. We remark that $\mathrm{ker} (p_{\RR^s} \circ \ell) \subset \{\pm 1\}$, thus $U$ has rank $s$.

Let $H := \{ z \in \CC, \mathrm{Im} (z) > 0 \}$. Combining the additive action of $\mathcal O_K$ and the multiplicative action of $U$, the group $U \ltimes \mathcal O_K$ acts freely, cocompactly and properly on $H^s \times \CC^t$. Thus, the quotient $X(K,U) := (H^s \times \CC^t) / (U \ltimes \mathcal O_K)$ is a compact manifold.

When $t=1$, the manifold $X(K,U)$ admits an LCK structure, which is determined by a Kähler potential
\[
F(z) := \prod\limits_{k=1}^s \frac{i}{z_k - \bar z_k} + \vert z_{s+1} \vert^2
\]
on its universal cover \cite{OT}. This induces in turn a similarity structure on $X(K,U)$. If this structure was Riemannian, the Kähler metric $\frac{i}{2} \partial \bar \partial F$ would descend to $X(K,U)$. This is impossible because an OT-manifold admits no Kähler metric \cite[Proposition 2.5]{OT}. In addition, from the form of the Kähler potential, the second factor $\CC^t (= \CC)$ of the  universal cover of $X(K,U)$ is a Riemannian factor. Thus, by Remark~\ref{CauchyborderLCP}, $X(K,U)$ admits an LCP structure when $t = 1$.
\end{example}

\begin{rem}
In Example~\ref{OTex}, when $s = t = 1$, the Kähler potential of the lift of the LCK metric to the universal cover is \cite{OT}
\begin{equation}
F : H \times \CC \to \RR, \quad F(z) := \frac{i}{z_1 - \bar z_1} + \vert z_2 \vert^2.
\end{equation}
Writing the Kähler form as $\frac{i}{2} \sum\limits_{k \neq l} \omega_{k l} \dd z_k \wedge \dd \bar z_l$, one has:
\begin{align*}
\omega_{1 1} = \partial_{z_1} \bar \partial_{z_1} \left( \frac{i}{z_1 - \bar z_1} + \vert z_2 \vert^2 \right) = \frac{1}{4} \left( \frac{\partial}{\partial x_1} - i \frac{\partial}{\partial y_1} \right) \left( \frac{\partial}{\partial x_1} + i \frac{\partial}{\partial y_1} \right) \frac{1}{2 y_1} = \frac{1}{4} \frac{1}{y_1^3}
\end{align*}
\[
\omega_{2 2} = 1 \qquad \omega_{1 2} = 0.
\]
Then, the metric can be rewritten as $g := \frac{1}{4 y_1^3} (\dd x_1^2 + \dd y_1^2) + (\dd x_2^2 + \dd y_2^2)$. We make the change of variable $v_1 := x_1 / 2$, $w_1 := \frac{1}{\sqrt{y_1}}$ and the metric becomes
\begin{equation}
g = (w_1^6 \dd v_1^2 + \dd w_1^2) + (\dd x_2^2 + \dd y_2^2).
\end{equation}
Moreover, the group $U$ is generated by a single unit $u \in \mathcal O^{\times, +}_K$ which satisfies $\sigma_1 (u) = \vert \sigma_2 (u) \vert^{-2}$. After the change of variable, the multiplicative action of $u$ is given, for any $(v_1, w_1, x_2 + i y_2) \in \RR \times \RR_+^* \times \CC$, by
\begin{align*}
u \cdot (v_1, w_1, x_2 + i y_2) &= (\sigma_1 (u) v_1, \sigma_1 (u)^{-\frac{1}{2}} w_1, \sigma_2 (u) (x_2 + i y_2)) \\
&= (\sigma_1 (u) v_1, \vert \sigma_2 (u) \vert w_1, \sigma_2 (u) (x_2 + i y_2)).
\end{align*}
If we look at the restriction of this action to $\RR \times \CC$ by dropping the variable $w_1$, we remark that the matrix of the transformation in a basis of the lattice $\sigma (\mathcal O_K)$ belongs to $\mathrm{SL}_3 (\ZZ)$ (see the proof of Corollary~\ref{LCPonOT} below for more details). Then, we recognize the example~\ref{exKou} in the case $q = 2$.
\end{rem}

\subsection{Foliations and LCP manifolds} \label{foliation}
A foliation of dimension $p$ of an $n$-dimensional manifold $M$ is a maximal atlas $(U_i, \phi_i)_{i \in I}$ on $M$ such that for each $i, j \in I$ the transition map $\Phi_{i,j} := \phi_j \circ \phi_i^{-1}: \phi_i(U_i \cap U_j) \to \phi_j (U_i \cap U_j)$ satisfies
\begin{equation}
\frac{\partial \Phi_{i,j}^l}{\partial x_k} = 0 \quad \textrm{for all $p + 1 \le l \le n$, and $1 \le k \le p$}
\end{equation}
where $x_k$ is the $k$-th coordinate of $\RR^n$.

A foliation induces a $p$-dimensional integrable distribution on $M$, taking at each point $x \in U_i$ the subspace of $T_x M$ given by $d \phi_i^{-1} (\phi_i(x)) (\RR^p \times\{ 0 \})$. From this, one can define the leaves of the foliation as follows: if $x \in M$, the leaf passing through $x$ is the set of all the points that can be reached from $x$ by continuous, piecewise differentiable paths whose tangent vector at each smooth point is in the distribution previously defined. For more details, see \cite{Mol}.

When the manifold $M$ is compact, one can extract a finite covering $(U_i)_{i \in J}$, $J \subset I$ such that for any $i \in J$ the open set $U_i$ is diffeomorphic to a product $V_i \times T_i$ where $V_i$ and $T_i$ are open cubes of $\RR^p$ and $\RR^{n-p}$ respectively. This induces maps $f_i : U_i \to T_i$ in a natural way, and we define the transition maps $\gamma_{ij} : f_i(U_i \cap U_j) \to f_j(U_i \cap U_j)$ by $f_j = \gamma_{ij} \circ f_i$. The disjoint union $T := \bigsqcup\limits_{i \in J} T_i$ is called the {\it transversal} of the foliation. The foliation is said to be Riemannian if there exists a metric on the transversal such that the transition maps are isometries.

In \cite[Theorem 1.9]{Kou}, it was shown that an LCP manifold carries a Riemannian foliation. More precisely, one has the following theorem:

\begin{theorem} \label{Kou1.9}
Let $(M, c, D)$ be a LCP manifold, and let $(\Tilde M, h_D) = \RR^q \times (N, g_N)$ be its universal cover endowed with the metric $h_D$ induced by $D$. Here, $(N, g_N)$ is the non-flat, irreducible factor of the de Rham decomposition. The foliation $\Tilde {\mathcal F}$ tangent to $\RR^q$ induces by projection a foliation $\mathcal F$ on $M$. Then $\mathcal F$ is a Riemannian foliation on $M$, and the closures of the leaves form a singular Riemannian foliation $\Bar {\mathcal F}$ on $M$, such that each leaf of $\Bar {\mathcal F}$ is a smooth manifold of dimension $d$, depending of the leaf, with $q < d < q+n$, where $n = dim(N)$.

Moreover, on each leaf of $\Bar {\mathcal F}$, there is a flat Riemannian metric which is compatible with the similarity structure of $M$.
\end{theorem}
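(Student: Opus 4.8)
The plan is to transfer everything to the universal cover $\Tilde M = \RR^q \times (N, g_N)$, where the foliation $\Tilde{\mathcal F}$ by the fibres $\RR^q \times \{x\}$ of the projection $p : \Tilde M \to N$ is manifest. First I would check that $\mathcal F$ is well defined on $M$: since every $\gamma \in \pi_1(M)$ preserves the de Rham decomposition and so splits as $\gamma = (\gamma_E, \gamma_N)$, it sends the leaf $\RR^q \times \{x\}$ to $\RR^q \times \{\gamma_N (x)\}$, hence permutes the leaves of $\Tilde{\mathcal F}$ and the foliation descends through $\pi : \Tilde M \to M$. In these terms, $N$ serves as a total transversal for $\mathcal F$ and the holonomy pseudogroup of $\mathcal F$ is generated by the action of $P = \{\gamma_N\}$ on $N$, via the isomorphism $\pi_1(M) \simeq P$ of Lemma~\ref{isomPpi}.

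The core of the proof is the Riemannian character of $\mathcal F$, which amounts to producing a holonomy-invariant transverse metric, i.e. a $P$-invariant metric on $N$. Here lies the main difficulty: writing $h_D = g_{\RR^q} \oplus g_N$, the similarity relation $\gamma^* h_D = \lambda_\gamma^2 h_D$ forces $\gamma_N^* g_N = \lambda_\gamma^2 g_N$, so $P$ acts on $N$ by genuine similarities and $g_N$ itself fails to be invariant as soon as some ratio $\lambda_\gamma \neq 1$. To circumvent this I would rely on the structure of the closure $\Bar P \subset \mathrm{Sim}(N)$ recalled above: $\Bar P^0$ is abelian and acts by isometries of $g_N$. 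One then argues that the holonomy pseudogroup is equicontinuous for $g_N$ — equivalently, that the transverse conformal class carries a compatible invariant metric — because compactness of $M$ confines the holonomy maps to relatively compact transversals on which the contracting and expanding behaviour of the similarities is controlled by the isometric action of $\Bar P^0$. By Molino's criterion this makes $\mathcal F$ a Riemannian foliation.

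Next I would describe the leaf closures. The closure in $M$ of the leaf through $\pi(a,x)$ is $\pi\bigl(\RR^q \times \Bar{P\cdot x}\bigr)$, and $\Bar{P \cdot x}$ is a smooth submanifold of $N$ whose dimension equals that of the $\Bar P^0$-orbit of $x$. This dimension is at least $1$: if $\Bar P^0$ were trivial then $\Bar P$, hence $P$, would be discrete in $\mathrm{Sim}(N)$, which is ruled out by the accumulation of $\pi_1(M)$-orbits obtained by conjugating a contracting element by powers of a similarity (as in the computation in Lemma~\ref{isomPpi}). It is also at most $n-1$: a full-dimensional orbit would make $\Bar P^0$ act transitively on $N$ by isometries, and a transitive abelian isometry group forces the quotient to be flat, contradicting the non-flatness of $N$. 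Therefore each leaf of $\Bar{\mathcal F}$ is a smooth manifold of dimension $d = q + \dim(\Bar P^0 \cdot x)$ with $q < d < q + n$.

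Finally, for the flat metric I would note that $\RR^q$ is flat and that the orbit $\Bar P^0 \cdot x$, being an orbit of a connected abelian group acting by isometries, is itself flat; their product is a flat Riemannian manifold, invariant under the residual action and therefore descending to a flat metric on each leaf of $\Bar{\mathcal F}$ compatible with the similarity structure. That the family of leaf closures assembles into a singular Riemannian foliation $\Bar{\mathcal F}$ with smooth leaves then follows from Molino's general theory of leaf closures of Riemannian foliations on compact manifolds. I expect the Riemannian property of $\mathcal F$ to be the genuine obstacle, precisely because the natural transverse datum $g_N$ is only conformally — not isometrically — preserved, so the argument must essentially exploit that $\Bar P^0$ acts by isometries.
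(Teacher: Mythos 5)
First, a point of comparison: the paper does not prove this statement at all — it is quoted verbatim as \cite[Theorem 1.9]{Kou}, so your attempt must be measured against Kourganoff's proof. Your reduction to the transverse action of $P$ on $N$ and your use of Molino's theory for the passage from a Riemannian foliation to the singular foliation by leaf closures are the right skeleton, and you correctly identify the crux (a holonomy-invariant transverse metric). But at that crux the proposal contains no argument: ``one then argues that the holonomy pseudogroup is equicontinuous \dots by Molino's criterion'' is a placeholder, and the ingredient you lean on ($\Bar P^0$ acts by isometries, \cite[Lemma 4.13]{Kou}) is itself part of the machinery developed to prove this very theorem. The missing idea is the Cauchy boundary of $N$: since $N$ is incomplete, $\partial N \neq \emptyset$, and a similarity of ratio $\lambda$ rescales the distance to $\partial N$ exactly by $\lambda$ (this is precisely Lemma~\ref{Kborder} of the paper). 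Rescaling $g_N$ by $d(\cdot,\partial N)^{-2}$ (after smoothing) therefore kills the nontrivial ratios and yields a complete $P$-invariant metric on the transversal $N$ — this is \cite[Lemma 4.9]{Kou}, which the paper itself invokes in the proof of Proposition~\ref{flatpartfun}. Your sketch never uses the incompleteness of $N$, and without it there is no mechanism to produce invariance from a genuinely non-isometric action; boundedness of ratios on compact pieces (the content of Corollary~\ref{cor1}) gives control, not an invariant metric.

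Both strict dimension bounds also have gaps as you argue them. For $d>q$ you show only that $P$ is non-discrete, i.e.\ $\Bar P^0 \neq \{1\}$; this does not prevent $\Bar P^0$ from fixing some point $x$, in which case that particular orbit is a point and the corresponding leaf closure has dimension exactly $q$. Excluding this requires more, e.g.\ the fact that elements of ratio $\neq 1$ act freely on $N/\Bar P^0$ (\cite[Proposition 4.16]{Kou}, used in the paper's Lemma~\ref{leavesisom}) combined with the completeness of the closed leaves, or a contraction/recurrence argument at the level of leaf closures — not merely the accumulation computation of Lemma~\ref{isomPpi}. For $d<q+n$, a full-dimensional orbit is open, but transitivity of $\Bar P^0$ on $N$ does not follow from openness alone (other orbits could a priori be smaller, and flatness of an open subset does not by itself contradict irreducibility in the smooth category). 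The clean argument runs through the last clause of the theorem: if $d=q+n$, a leaf is dense, its closure is $M$, and the flat metric compatible with the similarity structure would place $M$ in case (1) of Theorem~\ref{classWeylstruc}, contradicting non-flatness. Your observation that each orbit $\Bar P^0 x$, as a homogeneous space of a connected abelian isometry group, is flat — so each closed leaf is a Euclidean space times a flat torus — is correct and matches the paper's discussion; but as it stands the proposal proves neither the Riemannian property nor the strict inequalities.
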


\begin{definition} \label{flatpart}
In Theorem~\ref{Kou1.9}, we call the distribution tangent to the leaves of $\mathcal F$ the {\em flat distribution} on $M$, and the orthogonal distribution is called the {\em non-flat distribution}.
\end{definition}

Again, we recall several results and observations from \cite{Kou}. In the setting of Theorem~\ref{Kou1.9}, we can describe the leaves of $\mathcal F$ using the canonical surjection $\pi : \Tilde M \to M$, and the group $P$ previously defined. The leaf of $\mathcal F$ passing through $\pi (a, x)$ for $(a, x) \in \RR^q \times N$ is equal to $\pi (\RR^q \times P x)$, and its closure is $\Bar {\mathcal F}_x := \pi (\RR^q \times {\Bar P}^0 x)$  \cite[Lemma 4.11]{Kou}. By Theorem~\ref{Kou1.9}, the metric $h_D$ restricted to $\RR^q \times {\Bar P}^0 x$ descends to a metric $g_x$ on $\Bar {\mathcal F}_x$. Thus, the metric $h_D$ induces a Riemannian metric, up to a multiplicative factor, on the closure of the leaves of $\mathcal F$.

Since $\Bar{P}^0$ is abelian and acts by isometries, for any $x \in N$, the closed leaf $\Bar {\mathcal F}_x$ is the product of an Euclidean space and a flat torus. In particular, it is a complete space, which implies that an element of $\pi_1(M)$ with similarity ratio $\neq 1$ acts freely on $N / \Bar {P}^0$.

We consider the subgroup of $\pi_1(M)$ defined by $\Gamma_0 := \pi_1 (M) \cap (\mathrm{Sim} (\RR^q) \times \Bar{P}^0)$. From \cite[Lemma 4.18]{Kou}, we know that this group is a full lattice in $\RR^q \times \Bar{P}^0$ where $\RR^q$ is identified with its translations. In Example~\ref{exKou} for instance, $\Gamma_0$ is the group of translations $\ZZ^{q+1}$ acting on $\RR^{q+1}$. This observation explains why we will always consider such lattices in order to construct examples.

\section{Properties of LCP manifolds} \label{secpropLCP}

Let $(M,c,D)$ be an LCP manifold and $(\Tilde M, h_D) = \RR^q \times (N, g_N)$ be its universal cover, endowed with the similarity structure $h_D$ induced by $D$. We denote by $\pi : \Tilde M \to M$ the canonical surjection.

\subsection{Adapted metrics} In this subsection, we prove that there exists a metric $g \in c$ such that the Lee form $\theta_g$ of $D$ with respect to $g$ vanishes on the flat distribution (Definition~\ref{flatpart}) of $D$ on $M$. This is equivalent to the existence of a function of $N$ having the same equivariance (the term {\em same automorphy} is also often used in the litterature) as $h_D$ with respect to $\pi_1 (M)$. For this reason, we introduce the following definition:

\begin{definition}
Let $G$ be a group acting on a Riemannian manifold $(\mathcal Z, g_\mathcal Z)$ by similarities. A smooth function $f : \mathcal Z \to \RR$ is said to be $G$-equivariant if for every $\gamma \in G$, one has $\gamma^* e^{2f} = \lambda_\gamma^2 e^{2f}$ where $\lambda_\gamma$ is the similarity ratio of $\gamma$. Equivalently, a function $f$ is $G$-equivariant if $G$ consists of isometries of $e^{-2 f} g_\mathcal Z$.
\end{definition}

We now give an important property of the equivariant functions on the universal cover $\RR^q \times N$ of LCP manifolds: they are bounded on sets of the form $\RR^q \times K$ where $K$ is a compact subset of $N$. In order to prove this result, we recall that the Cauchy boundary $\partial \mathcal Z$ of a Riemannian manifold $\mathcal Z$ is the set $C \mathcal Z \setminus \mathcal Z$ where $C \mathcal Z$ is the metric completion of $\mathcal Z$. The Riemannian distance $d^\mathcal Z$ on $\mathcal Z$, is extended to $C \mathcal Z$ in the following natural way: if $(x_n), (y_n)$ are representatives of elements $x,y \in C \mathcal Z$ (which consists of equivalence classes of Cauchy sequences in $\mathcal Z$), $(d^\mathcal Z (x_n, y_n))_{n \in \NN}$ is a Cauchy sequence, and $d^\mathcal Z (x,y)$ is defined as the limit of this sequence. We first state the following easy lemma:

\begin{lemma} \label{Kborder}
Let $A$ be a subset of a Riemannian manifold $\mathcal Z$. Assume that $\partial Z$ is non-empty. We define $\alpha := \underset{x \in A}{\inf} d^\mathcal Z (x, \partial \mathcal Z)$ and $\beta := \underset{x \in A}{\sup} \; d^\mathcal Z (x, \partial \mathcal Z)$. Then, if $\gamma$ is a similarity of $\mathcal Z$ of ratio $\lambda \in \RR_+^*$, it extends uniquely to $C \mathcal Z$ as a uniformly continuous function on a dense subset of $C \mathcal Z$ and one has the property
\[
\forall x \in A, d^\mathcal Z (\gamma x, \partial \mathcal Z) \in [\lambda \alpha, \lambda \beta].
\]
\end{lemma}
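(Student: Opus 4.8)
The plan is to reduce everything to the single fact that a similarity of ratio $\lambda$ multiplies all Riemannian distances by $\lambda$, and then to transport this scaling to the metric completion. First I would observe that the condition $\gamma^* g_{\mathcal Z} = \lambda^2 g_{\mathcal Z}$ scales the length of every curve by the factor $\lambda$; since $\gamma$ is a bijection of $\mathcal Z$, every curve from $\gamma x$ to $\gamma y$ is the image under $\gamma$ of a curve from $x$ to $y$, so $d^{\mathcal Z}(\gamma x, \gamma y) = \lambda\, d^{\mathcal Z}(x,y)$ for all $x, y \in \mathcal Z$. In particular $\gamma$ is $\lambda$-Lipschitz, hence uniformly continuous on $\mathcal Z$, which is a dense subset of the complete metric space $C\mathcal Z$. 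By the standard extension theorem for uniformly continuous maps into a complete space, $\gamma$ extends uniquely to a uniformly continuous map $\bar\gamma : C\mathcal Z \to C\mathcal Z$; this settles the first assertion.

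Next I would record two consequences of this extension. Passing to the limit along Cauchy sequences and using the extended distance $d^{\mathcal Z}$ on $C\mathcal Z$, the identity $d^{\mathcal Z}(\bar\gamma u, \bar\gamma w) = \lambda\, d^{\mathcal Z}(u,w)$ persists for all $u, w \in C\mathcal Z$. Moreover, applying the same construction to $\gamma^{-1}$ yields a uniformly continuous extension $\overline{\gamma^{-1}}$, and by uniqueness of continuous extensions the compositions $\bar\gamma \circ \overline{\gamma^{-1}}$ and $\overline{\gamma^{-1}} \circ \bar\gamma$ restrict to the identity on the dense set $\mathcal Z$, hence equal the identity on all of $C\mathcal Z$. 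Thus $\bar\gamma$ is a bijection of $C\mathcal Z$ whose restriction to $\mathcal Z$ is the bijection $\gamma$ of $\mathcal Z$, so it maps $\partial\mathcal Z = C\mathcal Z \setminus \mathcal Z$ onto itself.

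With the boundary invariance $\bar\gamma(\partial\mathcal Z) = \partial\mathcal Z$ in hand, the estimate is immediate. For $x \in A$ I would reindex the infimum over $\partial\mathcal Z$ by $p = \bar\gamma q$ and write
\[
d^{\mathcal Z}(\gamma x, \partial\mathcal Z) = \inf_{p \in \partial\mathcal Z} d^{\mathcal Z}(\bar\gamma x, p) = \inf_{q \in \partial\mathcal Z} d^{\mathcal Z}(\bar\gamma x, \bar\gamma q) = \lambda \inf_{q \in \partial\mathcal Z} d^{\mathcal Z}(x, q) = \lambda\, d^{\mathcal Z}(x, \partial\mathcal Z).
\]
Since $x \in A$ gives $d^{\mathcal Z}(x, \partial\mathcal Z) \in [\alpha, \beta]$ by the very definition of $\alpha$ and $\beta$, multiplying by $\lambda > 0$ yields $d^{\mathcal Z}(\gamma x, \partial\mathcal Z) \in [\lambda\alpha, \lambda\beta]$, as claimed. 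Note that this argument in fact proves the sharper equality $d^{\mathcal Z}(\gamma x, \partial\mathcal Z) = \lambda\, d^{\mathcal Z}(x, \partial\mathcal Z)$, of which the stated membership is a weakening.

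The only genuinely delicate point is the boundary-invariance step: one must ensure that $\bar\gamma$ neither pushes interior points of $\mathcal Z$ into $\partial\mathcal Z$ nor pulls boundary points into $\mathcal Z$, which is exactly why I would set up $\bar\gamma$ and $\overline{\gamma^{-1}}$ as mutually inverse bijections of $C\mathcal Z$ before performing the distance-to-boundary computation. Everything else is the routine fact that similarities scale distances, combined with the universal property of the completion.
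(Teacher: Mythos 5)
Your proof is correct and follows essentially the same route as the paper's: the similarity scales all distances by $\lambda$, extends to the metric completion, permutes the Cauchy boundary $\partial \mathcal Z$, and hence scales the distance to $\partial \mathcal Z$ exactly by $\lambda$, from which the membership in $[\lambda\alpha,\lambda\beta]$ is immediate. The only difference is one of detail: the paper dismisses the extension and the invariance $\gamma(\partial \mathcal Z) = \partial \mathcal Z$ as ``easy to see from the definition,'' whereas you rigorously supply them via the uniformly continuous extensions of $\gamma$ and $\gamma^{-1}$, their mutual inverseness by uniqueness of continuous extension on a dense set, and the resulting bijectivity argument.
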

\begin{proof}
Let $x \in A$ and $\gamma$ a similarity of $\mathcal Z$ of ratio $\lambda \in \RR_+^*$. One has, $\alpha \le d^\mathcal Z (x, \partial \mathcal Z) \le \beta$. It is easy to see from the definition that $\gamma (\partial \mathcal Z) = \partial \mathcal Z$, thus $\lambda \alpha \le d^\mathcal Z (\gamma x, \partial \mathcal Z) \le \lambda \beta$.
\end{proof}

\begin{cor} \label{cor1}
In the setting of Lemma~\ref{Kborder}, for any compact subsets $K_1,K_2 \subset \mathcal Z$, the similarity ratios of the elements of $\Gamma = \lbrace \gamma \in \mathrm{Sim} (\mathcal Z), (\gamma K_1) \cap K_2 \neq \emptyset \rbrace$ are included in a compact subset of $\RR_+^*$.
\end{cor}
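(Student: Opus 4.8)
The plan is to deduce the statement directly from Lemma~\ref{Kborder} by controlling the distance-to-boundary function $\delta(x) := d^\mathcal Z(x, \partial \mathcal Z)$ on the two compact sets. First I would record two elementary facts about $\delta$. It is $1$-Lipschitz (hence continuous) as a distance function to a fixed nonempty subset of $C\mathcal Z$, and it is strictly positive on $\mathcal Z$: around any point of $\mathcal Z$ there is a metric ball of positive radius entirely contained in $\mathcal Z$ (by the manifold structure), so no point of $\partial \mathcal Z$ can come within that radius. Consequently, on each compact set $K_i \subset \mathcal Z$ the continuous function $\delta$ attains a strictly positive minimum $\alpha_i := \min_{K_i} \delta > 0$ and a finite maximum $\beta_i := \max_{K_i} \delta < \infty$, for $i = 1, 2$.

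Next I would run the two-sided estimate. Let $\gamma \in \Gamma$ have similarity ratio $\lambda \in \RR_+^*$, and choose a point $x \in K_1$ with $\gamma x \in K_2$, which exists precisely because $(\gamma K_1) \cap K_2 \neq \emptyset$. Applying Lemma~\ref{Kborder} with $A = K_1$ (so that its $\alpha, \beta$ are $\alpha_1, \beta_1$), and using $\alpha_1 \le \delta(x) \le \beta_1$, gives $\delta(\gamma x) \in [\lambda \alpha_1, \lambda \beta_1]$. On the other hand, $\gamma x \in K_2$ forces $\alpha_2 \le \delta(\gamma x) \le \beta_2$. Comparing the two inclusions yields $\lambda \alpha_1 \le \delta(\gamma x) \le \beta_2$ and $\alpha_2 \le \delta(\gamma x) \le \lambda \beta_1$, hence $\lambda \in [\alpha_2/\beta_1,\, \beta_2/\alpha_1]$. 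Since the endpoints of this interval depend only on $K_1, K_2$ and not on $\gamma$, every ratio of an element of $\Gamma$ lies in the fixed compact subset $[\alpha_2/\beta_1,\, \beta_2/\alpha_1] \subset \RR_+^*$, which is exactly the claim.

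The only step requiring genuine care — and the closest thing to an obstacle — is the strict positivity of $\alpha_1$ and $\alpha_2$, because this is what keeps the upper endpoint $\beta_2/\alpha_1$ finite and the lower endpoint $\alpha_2/\beta_1$ away from collapsing the interval. This is precisely where the hypothesis $K_i \subset \mathcal Z$ (rather than merely $K_i \subset C\mathcal Z$) together with the local Euclidean structure of the manifold is used; without it a compact set could touch the Cauchy boundary and $\alpha_i$ could vanish. I would also note that the assumption $\partial \mathcal Z \neq \emptyset$ inherited from the setting of Lemma~\ref{Kborder} is what makes $\delta$ finite and the whole argument meaningful. Once $\alpha_1, \alpha_2 > 0$ and $\beta_1, \beta_2 < \infty$ are secured, everything else is a direct comparison of the two ranges for $\delta(\gamma x)$.
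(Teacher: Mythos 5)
Your proof is correct and follows essentially the same route as the paper: bound the distance-to-Cauchy-boundary function on $K_1$ and $K_2$, apply Lemma~\ref{Kborder} at a point $x \in K_1$ with $\gamma x \in K_2$, and compare the two ranges to trap $\rho(\gamma)$ in $[\alpha_2/\beta_1,\,\beta_2/\alpha_1]$. The only difference is that you explicitly justify the strict positivity of $\alpha_1, \alpha_2$ (which the paper leaves implicit), a worthwhile addition since it is exactly what makes the interval a compact subset of $\RR_+^*$.
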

\begin{proof}
Let $\rho : \mathrm{Sim} (\mathcal Z) \to \RR_+^*$ be the group morphism which associates to an element of $\mathrm{Sim} (\mathcal Z)$ its similarity ratio. We also introduce
\[
\alpha_1 := \underset{x \in K_1}{\inf} d^\mathcal Z (x, \partial \mathcal Z) \qquad \beta_1 := \underset{x \in K_1}{\sup} \; d^\mathcal Z (x, \partial \mathcal Z)
\]
and
\[
\alpha_2 := \underset{x \in K_2}{\inf} d^\mathcal Z (x, \partial \mathcal Z) \qquad \beta_2 := \underset{x \in K_2}{\sup} \; d^\mathcal Z (x, \partial \mathcal Z).
\]
Let $\gamma \in \Gamma$. By definition, there exists $x \in K_1$ such that $\gamma x \in K_2$ so in particular we have
\[
\alpha_2 \le d^\mathcal Z (\gamma x, \partial \mathcal Z) \le \beta_2.
\]
Moreover by Lemma~\ref{Kborder} one has
\[
\rho(\gamma) \alpha_1 \le d^\mathcal Z (\gamma x, \partial \mathcal Z) \le \rho(\gamma) \beta_1,
\]
which implies
\begin{align*}
\rho(\gamma) \alpha_1 \le \beta_2  && \alpha_2 \le \rho(\gamma) \beta_1,
\end{align*}
so we conclude
\[
\alpha_2 / \beta_1 \le \rho(\gamma) \le \beta_2 / \alpha_1.
\]
Thus, $\rho(\Gamma)$ is included in the compact set $[\alpha_2 / \beta_1, \beta_2 / \alpha_1]$.
\end{proof}

We have now all the tools to prove the boundedness property for equivariant functions:

\begin{lemma} \label{fbound}
Let $f : \Tilde M \to \RR$ be a smooth $\pi_1(M)$-equivariant function. Then, for any compact subset $K$ of $N$, $f$ is bounded on $\RR^q \times K$.
\end{lemma}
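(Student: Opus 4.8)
The plan is to use the equivariance relation to transfer the problem to a compact fundamental set, and then to control the similarity ratios of the group elements involved by means of Corollary~\ref{cor1}. First I would rewrite the equivariance condition in logarithmic form: taking the logarithm of $\gamma^* e^{2f} = \lambda_\gamma^2 e^{2f}$ gives $f(\gamma z) = f(z) + \ln \lambda_\gamma$ for every $\gamma \in \pi_1(M)$ and every $z \in \Tilde M$, where $\lambda_\gamma$ is the similarity ratio of $\gamma$. Since $M$ is compact, the free and proper action of $\pi_1(M)$ on $\Tilde M$ admits a compact set $\mathcal K \subset \Tilde M$ whose translates cover $\Tilde M$; hence every $z \in \Tilde M$ satisfies $\gamma z \in \mathcal K$ for some $\gamma \in \pi_1(M)$. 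For such a $\gamma$ one has $f(z) = f(\gamma z) - \ln \lambda_\gamma$, and since $f$ is continuous on the compact set $\mathcal K$, the term $f(\gamma z)$ is bounded by $\sup_{\mathcal K} |f| < \infty$. Thus it remains only to bound $\ln \lambda_\gamma$ for those $\gamma$ that send a point of $\RR^q \times K$ into $\mathcal K$.

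The main difficulty is that $\RR^q \times K$ is not compact, so Corollary~\ref{cor1} cannot be applied directly on $\Tilde M$. The key observation that resolves this is that the similarity ratio of $\gamma = (\gamma_E, \gamma_N)$ acting on the product $\RR^q \times N$ equals the similarity ratio of its non-flat component $\gamma_N$ acting on $N$, since $h_D$ is the product metric and $\gamma_N^* g_N = \lambda_\gamma^2 g_N$. I would therefore project everything to the non-flat factor. Writing $\mathcal K_N := \mathrm{pr}_N(\mathcal K)$, which is compact in $N$ by continuity of the projection, the condition $\gamma z \in \mathcal K$ with $z = (a,x)$ and $x \in K$ forces $\gamma_N x \in \mathcal K_N$, so that $(\gamma_N K) \cap \mathcal K_N \neq \emptyset$. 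Since $N$ is incomplete, its Cauchy boundary is non-empty, so Corollary~\ref{cor1} applies with $\mathcal Z = N$, $K_1 = K$ and $K_2 = \mathcal K_N$, and yields a compact interval $[c_1, c_2] \subset \RR_+^*$ containing the ratio of $\gamma_N$, that is, $\lambda_\gamma \in [c_1, c_2]$.

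Putting these steps together, for every $z \in \RR^q \times K$ and a suitable $\gamma$ with $\gamma z \in \mathcal K$ one obtains $|f(z)| \le \sup_{\mathcal K} |f| + \max(|\ln c_1|, |\ln c_2|)$, a bound independent of $z$, which proves that $f$ is bounded on $\RR^q \times K$. I expect the only genuinely delicate point to be the one isolated above, namely that the non-compactness in the flat directions is harmless because the similarity ratio is detected entirely by the $N$-component, where the relevant sets are compact; everything else is a routine combination of cocompactness and Corollary~\ref{cor1}. One should also record the elementary facts that $\mathrm{pr}_N(\mathcal K)$ is compact and that $\gamma_N x$ lands in $\mathcal K_N$, but these are immediate from continuity of the projection and of the action.
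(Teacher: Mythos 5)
Your proof is correct and follows essentially the same route as the paper: cover $\Tilde M$ by translates of a compact set, use the logarithmic form of the equivariance, and bound the relevant similarity ratios via Corollary~\ref{cor1} applied on the non-flat factor $N$ (where the Cauchy boundary is non-empty), exploiting that the ratio of $\gamma$ is detected by $\gamma_N$. The only cosmetic difference is the direction of the group elements (you move points of $\RR^q \times K$ into the fundamental compact set, while the paper moves the fundamental compact set into $\RR^q \times K$), which is immaterial since ratios of inverses are inverses of ratios.
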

\begin{proof}
Let $K \subset N$ be a compact set. Since $\pi_1(M)$ acts cocompactly on $\Tilde M$, there is a compact set $C \subset \Tilde M$ such that $\pi_1(M) C = \Tilde M$. Moreover $C$ can be assumed to be equal to $C_E \times C_N$ where $C_E$ is a compact of $\RR^q$ and $C_N$ is a compact of $N$. Let
\[
\Gamma := \lbrace \gamma \in \pi_1(M), (\gamma C) \cap (\RR^q \times K) \neq \emptyset \rbrace = \lbrace \gamma \in \pi_1(M), (\gamma_N C_N) \cap K \neq \emptyset \rbrace.
\]
Let $\rho : \pi_1(M) \to \RR_+^*$ be the group morphism which associates to an element of $\pi_1(M)$ its similarity ratio. Then we can apply Corollary~\ref{cor1} with the setting $\mathcal Z := N$, $\Gamma := \Gamma_N$ (the subgroup of $\Gamma$ consisting of the projection of $\Gamma$ on the factor $N$), and we obtain that $\rho (\Gamma)$ is included in a compact set $[\alpha, \beta]$, with $\alpha, \beta > 0$. We know that $f$ is bounded on $C$, meaning there are $\alpha', \beta' \in \RR$ such that $\alpha' \le f \le \beta'$ on $C$. In addition, for any $x \in \RR^q \times K$, there is $\gamma \in \Gamma$ and $y \in C$ such that $\gamma y = x$. Thus, the equivariance property of $e^{2f}$ yields $\alpha' + \ln \alpha \le f(x) \le \beta' + \ln \beta$, which gives the desired result.
\end{proof}

For $x \in N$, let $S_x := \lbrace \gamma \in \pi_1(M) \vert \gamma_N \cdot x \in {\Bar P}^0 x \rbrace$ (${\Bar P}^0$ was defined in Definition~\ref{defP}). We recall that in Section~\ref{foliation} we defined the closed leaf $\Bar{\mathcal F}_x \subset M$, and showed that the metric $h_D$ descends to a metric $g_x$ on it. We give here a short proof of a result partially stated in the proof of \cite[Lemma 4.18]{Kou}.

\begin{lemma} \label{leavesisom}
Let $ x \in N$. Then, $\Bar{\mathcal F}_x$ is isomorphic to $(\RR^q \times \Bar{P}^0 x) / S_x$ and $S_x$ acts on $(\Tilde M, h_D)$ by isometries. Moreover, if $\gamma \in \pi_1(M)$ with similarity ratio $\lambda > 0$, there is a similarity $\Bar \gamma : (\Bar{\mathcal F}_x, g_x) \to (\Bar{\mathcal F}_{\gamma x}, g_{\gamma x})$  of ratio $\lambda$ for which the following diagram is commutative:
\begin{equation}
\begin{tikzcd}
\RR^q \times \Bar{P}^0 x \arrow[r, "\gamma"] \arrow[d, "\pi"] & \RR^q \times \Bar{P}^0 \gamma_N x \arrow[d, "\pi"] \\
\Bar{\mathcal F}_x \arrow[r, "\Bar \gamma"] & \Bar{\mathcal F}_{\gamma_N x}
\end{tikzcd}
\end{equation}
\end{lemma}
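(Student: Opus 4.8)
The plan is to realise $S_x$ as the stabiliser in $\pi_1(M)$ of the set $\RR^q \times \Bar{P}^0 x \subset \Tilde M$, and then to read off all three assertions from this single identification. First I would record that, since $\Bar{P}^0$ is the identity component of the closure $\Bar P$, it is normal in $\Bar P$; as $\gamma_N \in P \subset \Bar P$ for every $\gamma \in \pi_1(M)$, the element $\gamma_N$ normalises $\Bar{P}^0$, whence $\gamma(\RR^q \times \Bar{P}^0 x) = \RR^q \times \gamma_N \Bar{P}^0 x = \RR^q \times \Bar{P}^0 \gamma_N x$. In particular $\gamma$ preserves $\RR^q \times \Bar{P}^0 x$ exactly when $\gamma_N x \in \Bar{P}^0 x$, i.e. exactly when $\gamma \in S_x$; a short computation using the normality of $\Bar{P}^0$ shows moreover that $S_x$ is a subgroup. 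Since $\pi_1(M)$ acts freely and properly discontinuously, the restriction of $\pi$ to $\RR^q \times \Bar{P}^0 x$ identifies two points precisely when they lie in the same $S_x$-orbit, which gives the set-theoretic identification $\Bar{\mathcal F}_x \cong (\RR^q \times \Bar{P}^0 x)/S_x$ of the first assertion.

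For the second assertion, suppose some $\gamma \in S_x$ had similarity ratio $\lambda \neq 1$. Because $\gamma_N$ normalises $\Bar{P}^0$, it descends to a transformation of $N/\Bar{P}^0$, and the defining condition $\gamma_N x \in \Bar{P}^0 x$ says exactly that the class of $x$ is fixed by this transformation. This contradicts the fact recalled in Section~\ref{foliation}, namely that an element of $\pi_1(M)$ with ratio $\neq 1$ acts freely on $N/\Bar{P}^0$. Hence every $\gamma \in S_x$ has ratio $1$ and acts by isometries, which is the second assertion. Consequently $S_x$ acts isometrically and properly discontinuously on $(\RR^q \times \Bar{P}^0 x, h_D)$, the quotient metric $g_x$ is well defined, and the identification of the first assertion becomes an isometry onto $(\Bar{\mathcal F}_x, g_x)$.

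For the third assertion I would descend $\gamma$ through the two quotient maps. The same normality computation yields $\gamma S_x \gamma^{-1} = S_{\gamma_N x}$: for $s \in S_x$ one has $(\gamma s \gamma^{-1})_N \gamma_N x = \gamma_N s_N x \in \gamma_N \Bar{P}^0 x = \Bar{P}^0 \gamma_N x$, and applying this to $\gamma^{-1}$ gives the reverse inclusion. Thus $\gamma : \RR^q \times \Bar{P}^0 x \to \RR^q \times \Bar{P}^0 \gamma_N x$ intertwines the $S_x$-action with the $S_{\gamma_N x}$-action, so it descends to a map $\Bar\gamma : \Bar{\mathcal F}_x \to \Bar{\mathcal F}_{\gamma_N x}$ making the square commute. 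Since $\pi$ is a local isometry for $g_x$ and $g_{\gamma_N x}$, and $\gamma^* h_D = \lambda^2 h_D$, pulling back along $\pi$ gives $\pi^*(\Bar\gamma^* g_{\gamma_N x}) = \gamma^* \pi^* g_{\gamma_N x} = \lambda^2 \pi^* g_x$, so $\Bar\gamma^* g_{\gamma_N x} = \lambda^2 g_x$ and $\Bar\gamma$ is a similarity of ratio $\lambda$.

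The genuine content is concentrated in identifying $S_x$ with the set-stabiliser of $\RR^q \times \Bar{P}^0 x$, combined with the cited free-action property on $N/\Bar{P}^0$ for the ratio-one claim. Once these are in place, the remaining verifications — that $S_x$ is a group, that $\gamma$ conjugates $S_x$ to $S_{\gamma_N x}$, and that the quotient maps are local isometries so that ratios are preserved — are routine bookkeeping resting on the normality of $\Bar{P}^0$ in $\Bar P$. The main point to carry out with care is to ensure that every orbit manipulation uses only that $\Bar{P}^0$ is normal and that $\gamma_N \in \Bar P$, which is precisely what makes both the quotient $(\RR^q \times \Bar{P}^0 x)/S_x$ and the descent $\Bar\gamma$ of $\gamma$ well defined.
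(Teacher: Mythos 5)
Your proposal is correct and follows essentially the same route as the paper's proof: both rest on the normality of $\Bar{P}^0$ in $\Bar P$ to show $\gamma(\RR^q \times \Bar{P}^0 x) = \RR^q \times \Bar{P}^0 \gamma_N x$ and $\gamma S_x \gamma^{-1} = S_{\gamma_N x}$, and both deduce that $S_x$ acts by isometries from the fact that elements of $\pi_1(M)$ with ratio $\neq 1$ act freely on $N/\Bar{P}^0$. Your packaging of the first assertion via the set-stabiliser characterisation of $S_x$, and your explicit pullback computation $\pi^*(\Bar\gamma^* g_{\gamma_N x}) = \lambda^2 \pi^* g_x$ for the similarity ratio, are just streamlined versions of the paper's injectivity argument and its closing remark.
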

\begin{proof}
The proof of \cite[Proposition 4.16]{Kou} shows that the elements of $\pi_1(M)$ with ratio different from $1$ act freely on $N / {\Bar P}^0$. Since the set $S_x$ stabilizes $\Bar{P}^0 x$ in $N / {\Bar P}^0$, it contains only isometries.

Let $(a,y)$ and $(a',y')$ in $\RR^q \times \Bar{P}^0 x$. Assume there is $\gamma \in S_x$ such that $\gamma (a,y) = (a',y')$. By definition, $\pi (a,x) = \pi (a',y')$, thus, the application $\pi$ induces a surjective map $\phi : (\RR^q \times \Bar{P}^0 x) / S_x \to \Bar{\mathcal F}_x$. We will show that $\phi$ is injective. Assume $\phi (S_x (a,y)) = \phi (S_x (a',y'))$. Thus, one has $\pi (a,y) = \pi (a',y')$, meaning there is $\gamma \in \pi_1(M)$ such that $\gamma (a,y) = (a',y')$, implying $\gamma_N y = y'$. By definition, there are $p, p' \in \Bar{P}^0$ such that $y = p \cdot x$ and $y' = p' \cdot x$, so we obtain $\gamma_N p \cdot x = p' \cdot x$, whence $\gamma_N p \gamma_N^{-1} \gamma_N \cdot x \in \Bar{P}^0 x$. Using that $\Bar{P}^0$ is normal in $\Bar{P}$, because it is the connected component of the identity, one gets $\gamma_N \cdot x \in \gamma_N p^{-1} \gamma_N^{-1} \Bar{P}^0 x = \Bar{P}^0 x$. We conclude that $\gamma \in S_x$ and $S_x (a,y) = S_x (a',y')$, providing that $\phi$ is injective.

Now, let $\gamma \in \pi_1(M)$ with similarity ratio $\lambda$. One has
\[
\gamma (\RR^q \times \Bar{P}^0 x) = \RR^q \times \gamma_N \Bar{P}^0 x = \RR^q \times \gamma_N \Bar{P}^0 \gamma_N^{-1} \gamma_N x = \RR^q \times \Bar{P}^0 \gamma_N x,
\]
justifying the first line of the diagram. On the other hand, if $(a,y), (a',y')$ are elements of $\RR^q \times \Bar{P}^0 x$ such that there is $\gamma' \in S_x$ with $\gamma' (a,y) = (a',y')$, one has
\[
\pi \circ \gamma (a,y) = \pi (a,y) = \pi (\gamma' (a,y)) = \pi (a',y').
\]
Thus, $\pi \circ \gamma$ induces a surjective map from $(\RR^q \times \Bar{P}^0 x) / S_x$ to $(\RR^q \times \Bar{P}^0 \gamma_N x) / S_{\gamma_N x}$.
To prove that $\gamma$ descends to an isomorphism $\Bar \gamma$, it is then sufficient to prove that this map is injective, or equivalently that $S_x (a,y) = S_x (a',y')$ implies $S_{\gamma_N x} \gamma (a,y) = S_{\gamma_N x} \gamma (a',y')$. It is sufficient to show that $\gamma^{-1} S_{\gamma x} \gamma = S_x$, which follows from
\begin{align*}
\gamma^{-1} S_{\gamma_N x} \gamma &= \lbrace \gamma^{-1} \gamma' \gamma, \gamma' \in S_{\gamma_N x} \rbrace \\
&= \lbrace \gamma^{-1} \gamma' \gamma, \gamma'_N \gamma_N \cdot x \in \Bar{P}^0 \gamma_N  x \rbrace \\
&= \lbrace \gamma^{-1} \gamma' \gamma, \gamma_N^{-1} \gamma'_N \gamma_N \cdot x \in \Bar{P}^0 x \rbrace \\
&\subset S_x,
\end{align*}
using again that $\Bar{P}^0$ is a normal subgroup of $\Bar P$. The same proof shows that $\gamma S_x \gamma^{-1} \subset S_{\gamma_N x}$, so we conclude that $S_x = \gamma^{-1} S_{\gamma_N x} \gamma$, which shows the existence of $\Bar \gamma$.

We easily see that $\Bar \gamma$ is a similarity of ratio $\lambda$ using the commutative diagram and the fact that $h_D$ descends to the closure of the leaves.
\end{proof}

\begin{prop} \label{flatpartfun}
Let $(M, c, D)$ be an LCP manifold and $(\Tilde M, h_D) = \RR^q \times (N, g_N)$ be its universal cover, endowed with the similarity structure $h_D$ induced by $D$. Then, there exists a smooth $P$-equivariant function $\varphi : N \to \RR$ ($P$ was defined in Definition~\ref{defP}). In particular, if we denote by $\pi_N : \Tilde M \to N$ the second projection, $\pi_N^* \varphi$ is a $\pi_1(M)$-equivariant function on $\Tilde M$ depends only on the non-flat factor $N$.
\end{prop}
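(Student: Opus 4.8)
The plan is to reduce the statement to an averaging construction over the compact leaf closures of the foliation $\mathcal F$, for which Lemma~\ref{leavesisom} provides exactly the right tool. First I would fix any metric $g \in c$ and lift it to the universal cover, writing $\Tilde g = e^{-2 f_0} h_D$ for a smooth $\pi_1(M)$-equivariant function $f_0 : \Tilde M \to \RR$, so that $f_0(\gamma z) = f_0(z) + \ln \lambda_\gamma$ for every $\gamma \in \pi_1(M)$ of ratio $\lambda_\gamma$. Producing the desired $\varphi$ is equivalent to producing a metric $g' \in c$ whose Lee form kills the flat distribution: if $\pi_N^* \varphi$ is $\pi_1(M)$-equivariant then $e^{-2 \pi_N^* \varphi} h_D$ is $\pi_1(M)$-invariant and descends to such a $g'$, and conversely. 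The real content is therefore to manufacture an equivariant function that is constant along the $\RR^q$-directions.

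For the construction, fix $x \in N$. By Lemma~\ref{leavesisom} the group $S_x$ acts on $(\Tilde M, h_D)$ by isometries, so $f_0$, being equivariant with trivial ratios on $S_x$, is $S_x$-invariant along the leaf $\RR^q \times \Bar{P}^0 x$ and descends to a smooth function $\Bar{f}_0$ on the compact leaf closure $\Bar{\mathcal F}_x = (\RR^q \times \Bar{P}^0 x)/S_x$, which carries the flat metric $g_x$ induced by $h_D$ (Theorem~\ref{Kou1.9}). I then set
\[
\varphi(x) := \frac{1}{\vol(\Bar{\mathcal F}_x, g_x)} \int_{\Bar{\mathcal F}_x} \Bar{f}_0 \; \dd \vol_{g_x},
\]
which is finite since $\Bar{\mathcal F}_x$ is compact (Lemma~\ref{fbound} guarantees the relevant boundedness in any case). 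As $\Bar{\mathcal F}_x$ depends only on $\Bar{P}^0 x$, the function $\varphi$ is $\Bar{P}^0$-invariant.

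For the equivariance, given $p \in P$ I choose the corresponding $\gamma \in \pi_1(M)$ with $\gamma_N = p$ and ratio $\lambda = \lambda_p$ (Lemma~\ref{isomPpi}). Lemma~\ref{leavesisom} furnishes a similarity $\Bar\gamma : (\Bar{\mathcal F}_x, g_x) \to (\Bar{\mathcal F}_{px}, g_{px})$ of ratio $\lambda$ fitting into a commuting square with the projections, which forces $\Bar{f}_0 \circ \Bar\gamma = \Bar{f}_0 + \ln \lambda$ on $\Bar{\mathcal F}_x$. Changing variables through $\Bar\gamma$ in the integral defining $\varphi(px)$, the Jacobian factor $\lambda^{d}$, with $d = \dim \Bar{\mathcal F}_x$, appears simultaneously in the integral and in $\vol(\Bar{\mathcal F}_{px}, g_{px}) = \lambda^d \vol(\Bar{\mathcal F}_x, g_x)$, hence cancels, leaving $\varphi(px) = \varphi(x) + \ln \lambda$. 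This is precisely $P$-equivariance, i.e. $p^* e^{2\varphi} = \lambda_p^2 e^{2\varphi}$. The ``in particular'' clause is then immediate: $\pi_N^* \varphi$ factors through $\pi_N$, so it depends only on $N$, and $\pi_N^* \varphi (\gamma z) = \varphi(\gamma_N x) = \pi_N^* \varphi (z) + \ln \lambda_\gamma$ shows it is $\pi_1(M)$-equivariant.

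The delicate point, and the technical heart of the argument, is the smoothness of $\varphi$ on $N$. Being $\Bar{P}^0$-invariant, it is a basic function for the leaf-closure foliation $\Bar{\mathcal F}$, which by Theorem~\ref{Kou1.9} is a singular Riemannian foliation; smoothness is clear on the principal stratum but must be verified across the strata where $\dim \Bar{\mathcal F}_x$ jumps. I expect this to be the main obstacle, and I would resolve it with the local Molino description of a Riemannian foliation, in which the leaf closures are locally the compact orbits of an isometric action, together with the elementary fact that integrating a fixed smooth function over such orbits yields a smooth function of the transverse parameter even in the presence of singular orbits (exactly as for averaging under a compact group action); the positivity and smoothness of $x \mapsto \vol(\Bar{\mathcal F}_x, g_x)$ follow in the same way.
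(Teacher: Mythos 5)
Your construction of the averaged function and the equivariance argument are exactly the paper's first steps: same use of Lemma~\ref{leavesisom} to descend $f_0$ to each compact leaf closure, same boundedness input from Lemma~\ref{fbound}, same Jacobian cancellation $\vol(\Bar{\mathcal F}_{px}) = \lambda^d \vol(\Bar{\mathcal F}_x)$ (averaging $\bar f_0$ additively instead of $e^{2\bar f_0}$ multiplicatively is an immaterial variant). The gap is where you yourself locate it: the smoothness of the averaged function across the strata where $\dim \Bar{\mathcal F}_x$ jumps. Your proposed resolution --- that leaf closures are ``locally the compact orbits of an isometric action'' and that averaging a fixed smooth function over such orbits is smooth, ``exactly as for averaging under a compact group action'' --- does not go through as stated. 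First, the Molino description of a singular Riemannian foliation gives the leaf closures as orbits of the holonomy pseudogroup enriched by the commuting sheaf of transverse Killing fields, not as orbits of a compact Lie group acting on a neighbourhood; extracting a local compact-group model near a singular leaf closure is precisely the nontrivial point (indeed the paper's open questions, e.g.\ whether $\Bar{P}^0 \simeq \RR^t$ and whether the closure dimension is constant, show this structure is not controlled here). Second, even granting such a local model, your integrand is not a fixed smooth function on $M$: the function $f_0$ is equivariant, not invariant, so it does not descend to $M$; what you integrate is the leafwise descent $\bar f_0$ through the quotient by $S_x$, and the lattice $S_x$ itself varies (and can jump) with $x$. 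So the standard Haar-averaging smoothness argument, which needs one smooth function and one group acting, does not directly apply.

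The paper concedes exactly this: it states that the averaged function $w$ ``is not necessarily smooth'' and repairs it by a second, genuinely different step that your proposal is missing. Namely, since $\mathcal F$ is a Riemannian foliation, there is a complete auxiliary metric $\Tilde g_N$ on $N$ for which $P$ acts by \emph{isometries} (\cite[Lemma 4.9]{Kou}); one then sets
\begin{equation*}
e^{2\varphi(x)} := \int_{T_x N} e^{2w}\circ \exp_x(v)\, \chi(\Vert v\Vert)\, \dd V_x ,
\end{equation*}
a convolution with a radial bump $\chi$ supported below a third of the injectivity radius of $(N,\Tilde g_N)$. Smoothness of $\varphi$ is proved by rewriting the integral in a single exponential chart via the smooth distance-squared function (no smoothness of $w$ needed, only local boundedness, which your Lemma~\ref{fbound} step supplies), and $P$-equivariance is preserved because each $p \in P$ is a $\Tilde g_N$-isometry while $p^* e^{2w} = \lambda^2 e^{2w}$. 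If you replace your final paragraph by this mollification step, your argument becomes the paper's proof; as it stands, the smoothness claim is an unproven assertion carrying the technical weight of the proposition.
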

\begin{proof}
We first prove that there always exists a $\pi_1(M)$-equivariant function on $\Tilde M$. Let $g$ be any Riemannian metric on $M$ in the conformal class $c$. The pull-back $\Tilde g$ of $g$ to $\Tilde M$ satisfies $e^{2f} \Tilde g = h_D$ for a function $f : \Tilde M \to \RR$, which is clearly $\pi_1(M)$-equivariant.

By Lemma~\ref{leavesisom}, $\Bar {\mathcal F}_x \simeq (\RR^q \times {\Bar P}^0 x) / S_x$ and $S_x$ acts by isometries, so the function $f\vert_{\RR^q \times {\Bar P}^0 x}$ descends to a function ${\bar f}_x$ on $\Bar {\mathcal F}_x$. The manifold $\Bar {\mathcal F}_x$ being compact, we can define
\begin{equation}
e^{2 w(x)} := \left( \int_{\Bar {\mathcal F}_x} \dd \mu_x \right)^{-1} \left( \int_{\Bar {\mathcal F}_x} e^{2{\bar f}_x} \dd \mu_x \right),
\end{equation}
where $\dd \mu_x$ is the measure induced by the metric $g_x$. Doing this for any $x \in N$ gives a function $w : N \to \RR$. We claim that this function is bounded on any compact subset of $N$. Indeed, if $K \subset N$ is compact, by Lemma~\ref{fbound} there is a constant $\beta_K > 0$ such that $f(a,x) \le \beta_K$ for any $(a,x) \in \RR^q \times K$. Since ${\Bar P}^0$ acts by isometries,  $f(a,x) \le \beta_K$ for any $(a,x) \in \RR^q \times (P \cap {\Bar P}^0) K$, and by density this still holds for $(a,x) \in \RR^q \times {\Bar P}^0 K$. Thus, for any $x \in K$ one has $\bar f_x \le \beta_K$ and consequently $w (x) \le \beta_K$.

We now check that the function $w$ still has the desired equivariance. Let $p \in P$, and let $\lambda > 0$ be its similarity ratio with respect to the metric $g_N$. By Lemma~\ref{isomPpi}, there is a unique $\gamma \in \pi_1(M)$ such that $p = \gamma_N$. Denoting $y := p \cdot x$, Lemma~\ref{leavesisom} allows us to define $\Bar \gamma : \Bar{\mathcal  F}_x \to \Bar{\mathcal  F}_{y}$, which is a similarity of ratio $\lambda$. Thus, one has
\begin{align*}
e^{2 w(y)} &= \left( \int_{\Bar {\mathcal F}_y} \dd \mu_y \right)^{-1} \left( \int_{\Bar {\mathcal F}_y} e^{2{\bar f}_y} \dd \mu_y \right) \\
&= \left( \int_{\Bar {\mathcal F}_x} \Bar{\gamma}^*(\dd \mu_y) \right)^{-1} \left( \int_{\Bar {\mathcal F}_x} \Bar{\gamma}^*(e^{2{\bar f}_y}) \Bar{\gamma}^*(\dd \mu_y) \right) \\
&= \left( \int_{\Bar {\mathcal F}_x} \lambda^n \dd \mu_x) \right)^{-1} \left( \int_{\Bar {\mathcal F}_x} \lambda^2 e^{2{\bar f}_x} \lambda^n \dd \mu_x \right) \\
&= \lambda^2 e^{2w(x)}.
\end{align*}

However, the function $w$ is not necessarily smooth. We will use a convolution process to obtain the desired smooth equivariant function. Since the foliation $\mathcal F$ is Riemannian, one can define a complete Riemannian metric $\Tilde g_N$ on $N$ with respect to which $P$ acts by isometries (see \cite[Lemma 4.9]{Kou} for further details).

As $P$ acts cocompactly by isometries on $(N, \Tilde g_N)$, the injectivity radius $r_0$ of $(N, \Tilde g_N)$ is positive i.e. for any $x \in N$ the Riemannian exponential $\exp_x$ defined by $\Tilde g_N$ is a diffeomorphism on $B_x (r_0)$, the open ball of radius $r_0$ and center $0$ in $T_x N$. Let $0 < 3 r < r_0$ and let $\chi : \RR_+ \to \RR_+$ be a smooth plateau function in a neighbourhood of $0$, compactly supported in $[0, r]$. For every $x \in N$, let $\dd V_x$ be the measure induced on $T_x N$ by the metric $\Tilde g_N$. Consider the function $\varphi : N \to \RR$ given by
\begin{equation}
e^{2 \varphi(x)} := \int_{T_x N} e^{2 w} \circ \exp_x (v) \chi (\Vert v \Vert) \dd V_x(v),
\end{equation}
which is well-defined because the function $e^{2w}$ is bounded on any compact subset of $N$.

We claim that the function $\varphi$ is smooth. To prove this fact, we first remark that for any $x$, if one denotes by $B_N(y, a) := \exp_y (B_y(a))$ the ball of radius $0 < a < r_0$ and center $y$ in $(N, \Tilde g_N)$, for any $y \in B_N(x, r)$ the Riemannian exponential is a diffeomorphism from $B_y(2 r)$ to $B_N (y, 2 r)$ because $r < r_0/3$. In particular, $B_N(x,2 r)$ does not meet the cut-locus of $y$, and the square of the distance function $d^{\Tilde g_N}$ induced by $\Tilde g_N$ is smooth on $B_N(x,2 r)$. Consequently, we can apply a differentiation under integral argument if we remark that for $y := \exp_x (v_0) \in B_N(x, r)$ (with $v_0 \in T_x N$), one has
\begin{align*}
e^{2 \varphi(y)} &= \int_{T_y N} e^{2 w} \circ \exp_y (v) \chi (\Vert v \Vert) \dd V_y (v) \\
&= \int_{T_y N} e^{2 w} \circ \exp_x \circ \exp_x^{-1} \circ \exp_y (v) \chi (d^{\Tilde g_N} (\exp_x \circ \exp_x^{-1} \circ \exp_y (v), 0)) \dd V_y (v) \\
&= \int_{T_x N} e^{2 w} \circ \exp_x (v) \chi(d^{\Tilde g_N} (\exp_x (v), \exp_x (v_0))) (\exp_y^{-1} \circ \exp_x)^* (\dd V_y) (v) \\
&= \int_{T_x N} e^{2 w} \circ \exp_x (v) \chi(d^{\Tilde g_N} (\exp_x (v), y)) vol(y, v) \dd V_x (v)
\end{align*}
where $vol$ is a smooth function giving the change of volume element. 

It remains to check the equivariance property. Let $p \in P$, and let $\lambda > 0$ be its similarity ratio for the metric $h_D$. One has, denoting $y := p \cdot x$, and using the fact that $p$ is an isometry of $(N, \Tilde g_N)$:
\begin{align*}
e^{2 \varphi(y)} &= \int_{T_y N} e^{2 w} \circ \exp_y (v) \chi(\Vert v \Vert) \dd V_y (v) \\
&= \int_{T_x N} (p^* e^{2 w}) \circ \exp_x (v) \chi(\Vert v \Vert) p^*(\dd V_y) (v) \\
&= \int_{T_x N} \lambda^2 e^{2 w} \circ \exp_x (v) \chi(\Vert v \Vert) \dd V_x (v) \\
&= \lambda^2 e^{2 \varphi}.
\end{align*}
Then, $\varphi$ is a $P$-equivariant function.
\end{proof}

\begin{rem}
It is easy to show that the $P$-equivariant function $\varphi$ given by Proposition~\ref{flatpartfun} is in fact $\Bar P$-equivariant. Indeed, for any $p \in P \cap \Bar{P}^0$ one has $p^* \varphi =  \varphi$ since $\Bar P^0$ acts by isometries. As $P \cap \Bar P^0$ is dense in $\Bar P^0$, we actually have $\varphi = p^* \varphi$ for all $p \in {\Bar P}^0$. Our claim thus follows from \cite[Lemma 4.10]{Kou}, which states that $\Bar P = P \Bar P^0$.
\end{rem}

We define a particular class of metric on $M$:
\begin{definition} \label{adaptedDef}
A metric $g$ on $M$ with lift $\Tilde g$ on $\Tilde M$ is said to be adapted if there exists a smooth function $f : N \to \RR$ such that $e^{2f} \Tilde g = h_D$.
\end{definition}
With this definition, Proposition~\ref{flatpartfun} just states that there exist adapted metrics.

As a direct application of Proposition~\ref{flatpartfun}, we show that given a compact manifold $K$ with universal cover $\Tilde K$, it is possible to construct an LCP manifold with universal cover $\Tilde M \times \Tilde K$. Indeed, let $\varphi : N \to \RR$ be the smooth equivariant function given by Proposition~\ref{flatpartfun}. Let $g_K$ be a metric on $K$ and $\Tilde g_K$ its pull-back to $\Tilde K$. The metric
\begin{equation} \label{defhMK}
h_{M,K} := h_D + e^{2 \varphi} \Tilde g_K
\end{equation}
on $\Tilde M \times \Tilde K$ defines a similarity structure on $M \times K$, and thus an LCP structure $(M \times K, c_K, D_K)$, which proves our claim.

We give a name to the previous construction
\begin{definition} \label{extensionDef}
The LCP structure $(M \times K, c_K, D_K)$ is called an {\em extension} of $(M, c, D)$ (by $K$).
\end{definition}

\begin{prop}
Let $(M \times K, c_K, D_K)$ be an extension by $K$ of $(M, c, D)$. Then, the non-flat part of $(\Tilde M \times \Tilde K, h_{M, K})$ ($h_{M,K}$ is defined in Equation~\eqref{defhMK}) is $N \times \Tilde K$.
\end{prop}
\begin{proof}
It is easy to see that the non-flat distribution (Definition~\ref{flatpart}) of $(\Tilde M \times \Tilde K, h_{M, K})$ is a subdistribution of $T (N \times \Tilde K)$ since it has to be orthogonal to the flat distribution, and then orthogonal to $\RR^q$. From the definition of LCP manifold (see Theorem~\ref{classWeylstruc} and the definition below), $(N \times \Tilde K, g_N + e^{2 \varphi} \Tilde g_K)$ has a de Rham decomposition of the form $\RR^{q'} \times (N', g_{N'})$, where $q'$ migth be $0$ and $(N', g_{N'})$ is an incomplete non-flat manifold.

We introduce the notations $g := g_N + e^{2 \varphi} \Tilde g_K$ and $g' := e^{-2 \varphi} g_N + \Tilde g_K$, so that $g = e^{2 \varphi} g'$, and let $\nabla'$ be the Levi-Civita covariant derivative of $g'$. We recall that the restriction of $\Tilde D_K$ to $N \times \Tilde K$ is the Levi-Civita of the metric $g$. Let $k \in \Tilde K$, and $X, Y \in T (N \times \{ k \})$. Now, we use the formula for the Levi-Civita connection under conformal change \cite[Theorem 1.159, a)]{Besse} and we obtain:
\begin{align*}
\nabla'_X Y = (\Tilde D_K)_X Y - d \varphi (X) Y - d \varphi (Y) X + g (X,Y) \Tilde D_K \varphi.
\end{align*}
We identify $N \times \{ k \}$ with $N$ in the canonical way, and using again the formula of conformal change for the metric $g'\vert_N = e^{-2 \varphi} g_N$, one obtains:
\begin{align*}
\nabla'_X Y = \Tilde D_X Y - d \varphi (X) Y - d \varphi (Y) X + g_N (X,Y) \Tilde D \varphi.
\end{align*}
Combining these two equations and remarking that $g(X, Y) \Tilde D_K \varphi = g_N (X,Y) \Tilde D \varphi $ we obtain $(\Tilde D_K)_X Y = \Tilde D_X Y$, which means that $N \times \{k \}$ is totally geodesic in $N \times \Tilde K$.

Suppose now that $q' \neq 0$. Let $X \in T \RR^{q'}$ be a parallel vector field of norm $1$. It induces canonically a parallel vector field of norm $1$, still denoted by $X$, on the Riemannian manifold $(N \times \Tilde K, g_N + e^{2 \varphi} \Tilde g_K)$. We claim that $X$ is tangent to $\Tilde K$. Indeed, for any $k \in \Tilde K$, the projection of $X$ onto $T (N \times \{ k \})$ is parallel because $N \times \{ k \}$ is totally geodesic. However, $(N, g_N)$ is irreducible and of dimension greater than $2$, so it does not admit a non-zero parallel vector field, thus this projection is equal to zero. Now we remark that $g'$ is a product metric, so $\nabla'_X X \in T \Tilde K$ and another use of the formula for the Levi-Civita connection under conformal change gives:
\begin{align*}
0 = (\Tilde D_K)_X X = \nabla'_X X + 2 d \varphi (X) X - g'(X,X) \nabla \varphi = \nabla'_X X - g'(X,X) \nabla \varphi
\end{align*}
because $\varphi$ is a function of $N$. Thus $T \Tilde K \ni \nabla'_X X = g'(X,X) \nabla \varphi$, and $g'(X,X) \neq 0$ so $\nabla \varphi \in T \Tilde K$ and $\nabla \varphi \in T N$, again because $\varphi$ is a function of $N$, which implies $\nabla \varphi = 0$ and $\varphi$ is constant. This is absurd because of the $\pi_1 (M)$-equivariance of $\varphi$, so $q' = 0$ and we conclude that $(N \times \Tilde K, g_N + e^{2 \varphi} \Tilde g_K)$ is irreducible, thus it is the non-flat part of the LCP manifold.
\end{proof}

In particular, the dimension of the non-flat part of the universal cover of an LCP manifold can be of any integer higher or equal to $2$.

These observations lead to the definition of reducible LCP manifolds:

\begin{definition}
A LCP manifold is called reducible if it arises from the previous construction, up to a finite covering. A non-reducible LCP manifold is called irreducible.
\end{definition}

\subsection{Similarity ratios of $\pi_1(M)$}

In the known examples of LCP manifolds, the similarity ratios are always algebraic numbers because they are roots of characteristic polynomials of matrices with coefficients in $\ZZ$. We will prove that this property is always true.

\begin{prop} \label{similratio}
Let $(M,c,D)$ be an LCP manifold. For any $\gamma \in \pi_1(M)$, the ratio of $\gamma$ viewed as a similarity of $(\Tilde M, h_D)$ is a unit of an algebraic number field.
\end{prop}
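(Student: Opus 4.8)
The plan is to realise the similarity ratio $\lambda$ of $\gamma$ as the modulus of an eigenvalue of an integer matrix of determinant $\pm1$, produced by the conjugation action of $\gamma$ on the lattice $\Gamma_0=\pi_1(M)\cap(\mathrm{Sim}(\RR^q)\times\Bar P^0)$. The first point to establish is that $\Gamma_0$ is normal in $\pi_1(M)$: for $\delta\in\Gamma_0$ the $N$-component of $\gamma\delta\gamma^{-1}$ is $\gamma_N\delta_N\gamma_N^{-1}$, which lies in $\Bar P^0$ because $\delta_N\in\Bar P^0$, $\gamma_N\in P\subseteq\Bar P$, and $\Bar P^0$ is normal in $\Bar P$ as its identity component; the $\RR^q$-component is automatically a similarity of $\RR^q$, so $\gamma\delta\gamma^{-1}\in\Gamma_0$. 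Recall from \cite[Lemma 4.18]{Kou} that $\Gamma_0$ is a full lattice in the abelian Lie group $V:=\RR^q\times\Bar P^0$, with $\RR^q$ identified with its translations.

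Next I would turn conjugation into an integer matrix. The map $c_\gamma\colon\delta\mapsto\gamma\delta\gamma^{-1}$ is a Lie group automorphism of $V$ preserving $\Gamma_0$. Passing to the universal cover $p\colon\widetilde V\to V$, which is a Euclidean space $\widetilde V\cong\RR^m$ with $m=\dim V$, the automorphism $c_\gamma$ lifts to a linear map $A$ of $\RR^m$ fixing the origin, and $A$ preserves the full lattice $\Lambda:=p^{-1}(\Gamma_0)\cong\ZZ^m$. In a $\ZZ$-basis of $\Lambda$ the map $A$ is then an element of $\mathrm{GL}_m(\ZZ)$, so its characteristic polynomial $\chi_A$ is monic, lies in $\ZZ[X]$, and has constant term $\pm1$; hence every root of $\chi_A$ is an algebraic integer whose inverse is again an algebraic integer, i.e. a unit of a number field.

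The key geometric input is to compute the restriction of $A$ to the invariant subspace $\RR^q\subseteq\widetilde V$ of flat translations. Since $h_D$ is a Riemannian product, the ratio of $\gamma_E$ on $\RR^q$ equals $\lambda$, so $\gamma_E(a)=\lambda Ra+c$ for some $R\in\mathrm{O}(q)$; a direct computation shows that $c_{\gamma_E}$ sends the translation by $t$ to the translation by $\lambda Rt$, whence $A|_{\RR^q}=\lambda R$ and $\chi_{\lambda R}$ divides $\chi_A$. Because $q\geq1$, the matrix $\lambda R$ has an eigenvalue $\mu=\lambda\zeta$ with $|\zeta|=1$, and $\mu$ is a root of $\chi_A$, hence an algebraic unit; as $A$ is real, $\bar\mu$ is also a root of $\chi_A$, so $\lambda^2=\mu\bar\mu$ is a unit. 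Then $\lambda$ is a root of $X^2-\lambda^2$ and $\lambda^{-1}$ a root of $X^2-\lambda^{-2}$, with $\lambda^2,\lambda^{-2}$ algebraic integers, so both $\lambda$ and $\lambda^{-1}$ are algebraic integers and $\lambda$ is a unit of $\QQ(\lambda)$.

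I expect the main obstacle to be the bookkeeping around $\Gamma_0$ and the possible torus factor of $\Bar P^0$: one must verify normality carefully, invoke that $\Gamma_0$ is a full lattice, and pass to the universal cover $\widetilde V$ so that $c_\gamma$ genuinely lands in $\mathrm{GL}_m(\ZZ)$ rather than merely acting on $V$. Once this integer model is secured, identifying $A|_{\RR^q}=\lambda R$ and reading off $\lambda$ as a unit through the relation $\lambda^2=\mu\bar\mu$ is routine.
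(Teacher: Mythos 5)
Your proof is correct and takes essentially the same route as the paper: conjugation by $\gamma$ acting on the full lattice $\Gamma_0 \subset \RR^q \times \Bar{P}^0$ from \cite[Lemma 4.18]{Kou}, linearized on the universal cover (the paper uses the Lie group exponential, which is the same map for this abelian group) to obtain a matrix in $\mathrm{GL}_{q+t}(\ZZ)$ whose restriction to $\RR^q$ is $\lambda$ times an isometry, giving that $\lambda^2$, and hence $\lambda$, is a unit. Your explicit verifications that conjugation preserves $\Gamma_0$ (via normality of $\Bar{P}^0$ in $\Bar{P}$) and that $\lambda^2$ being a unit forces $\lambda$ to be one are details the paper leaves implicit, so this is a faithful, slightly more complete rendering of the same argument.
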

\begin{proof}
Let $\gamma \in \pi_1(M)$ and let $\lambda$ be its similarity ratio. For any $a \in \RR^q$ we will denote by $\tau_a$ the translation by $a$ in $\RR^q$, so $\RR^q$ is naturally identified with the space of translations. The restriction of $\gamma$ to $\RR^q$ can be written as $\gamma_E =: \tau_\alpha \circ \lambda \iota$ where $\iota$ is an isometry of $\RR^q$ endowed with the metric induced by $h_D$, and $\alpha \in \RR^q$.

Since $\Bar{P}^0$ is an abelian Lie group, the group $\RR^q \times \Bar{P}^0$ is abelian too. We define the group automorphism $\phi : \RR^q \times \Bar{P}^0 \to \RR^q \times \Bar{P}^0$ by
\begin{equation}
\phi (\tau_a, p) := \gamma (\tau_a, p) \gamma^{-1} = (\tau_{\lambda \iota a}, \gamma_N p \gamma_N^{-1}).
\end{equation}
Our proof relies on the crucial fact that the group $\Gamma_0 := \pi_1 (M) \cap (\mathrm{Sim} (\RR^q) \times \Bar{P}^0)$ defined in Section~\ref{foliation} is a full lattice in $\RR^q \times \Bar{P}^0$ by \cite[Lemma 4.18]{Kou}.

The preimage of $\Gamma_0$ by the Lie group exponential map is a full lattice $\Gamma_0'$ of the Lie algebra of $\RR^q \times \Bar{P}^0$, which is canonically identified with $\RR^{q+t}$, for some $t \ge 1$. The differential of $\phi$ at $e$ is a linear map satisfying $d_e \phi (\Gamma_0') \subset \Gamma_0'$ because $\phi (\Gamma_0) \subset \Gamma_0$. Moreover, $\phi$ is invertible and the symmetry between $\gamma$ and $\gamma^{-1}$ in the previous discussion gives that $d_e \phi^{-1} (\Gamma_0') \subset \Gamma_0'$. Thus, if we take a basis $\mathcal B$ of the lattice $\Gamma_0'$, the matrix $A := \mathrm{Mat}_\mathcal B (d_e \phi)$ is in  $\mathrm{GL}_{q+t} (\ZZ)$. But $\phi$ stabilizes $\RR^q$ and its restriction to this space coincides with $\lambda \iota$. It means that there exists a complex number $z$ of modulus $1$ such that $\lambda z$ and $\lambda \bar z$ are roots of the characteristic polynomial $\chi_A$ of $A$. Since $A \in \mathrm{GL}_{q+t} (\ZZ)$, $\lambda z$ and $\lambda \bar z$ are units of the algebraic field $K$ generated by the roots of $\chi_A$. Thus, $\lambda^2 = (\lambda z) (\lambda \bar z)$ is a unit of $K$, and therefore $\lambda$ is a unit in a quadratic extension of $K$.
\end{proof}

\section{Examples of LCP manifolds} \label{SectionExLCP}

We begin this section by stating a well-known result which will be useful for constructing LCP manifolds:

\begin{prop} \label{quotientsem}
Let $G$ be a discrete topological group acting on a manifold $M$. Let $D \trianglelefteq G$ be a normal subgroup. Then, $G/D$ acts on $M/D$, and $(M/D)/ (G/D)$ is in bijection with $M/G$.

If moreover $D$ and $G/D$ act freely and properly discontinuously on $M$ and $M/D$ respectively, then so does $G$ on $M$. In particular, $(M/D)/(G/D)$ and $M/G$ are diffeomorphic manifolds.
\end{prop}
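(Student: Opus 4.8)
The plan is to treat the two assertions in turn, the first being purely set-theoretic and the second requiring the freeness and proper discontinuity hypotheses.

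First I would define the action of $G/D$ on $M/D$ by the formula $(gD) \cdot (Dx) := D(gx)$, where $Dx$ denotes the $D$-orbit of $x$. The essential point is that this is well-defined, and it is here that the normality of $D$ intervenes: if $g' = gd$ and $x' = d'x$ with $d, d' \in D$, then using $gDg^{-1} = D$ one rewrites $gd = (gdg^{-1})g$ and $gd' = (gd'g^{-1})g$ with $gdg^{-1}, gd'g^{-1} \in D$, from which $D(g'x') = D(gx)$ follows at once. Once the action is defined, the bijection $(M/D)/(G/D) \to M/G$ comes from the observation that the preimage under $\pi_D : M \to M/D$ of the $(G/D)$-orbit of $Dx$ is exactly the $G$-orbit $Gx$; this produces a well-defined map $\Psi(Gx) := (G/D)(Dx)$, whose injectivity and surjectivity are immediate from the same observation.

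For the second assertion, freeness of $G$ follows quickly: if $gx = x$ then $gD$ fixes $Dx$ in $M/D$, so $gD = D$ by freeness of $G/D$, hence $g \in D$, and then $g = e$ by freeness of $D$. The main work, and the principal obstacle, lies in proving that $G$ acts properly discontinuously. I would use the characterization that for every compact $K \subset M$ the set $\{ g \in G : gK \cap K \neq \emptyset \}$ is finite. Given such a $g$, its image $gD$ satisfies $(gD)\bar K \cap \bar K \neq \emptyset$ where $\bar K := \pi_D(K)$ is compact, so by proper discontinuity of $G/D$ on $M/D$ the element $g$ lies in one of finitely many cosets $g_1 D, \dots, g_m D$. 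Writing $g = g_i d$, the condition $gK \cap K \neq \emptyset$ forces $d$ into the set $\{ d \in D : dL \cap L \neq \emptyset \}$ for the compact set $L := K \cup g_i^{-1} K$, which is finite by proper discontinuity of $D$. Combining the finitely many cosets with the finitely many admissible $d$ in each coset yields the finiteness of $\{ g \in G : gK \cap K \neq \emptyset \}$.

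Finally, for the smooth statement, the hypotheses guarantee that $\pi_D : M \to M/D$ is a covering onto the manifold $M/D$, that $M/D \to (M/D)/(G/D)$ is again a covering, and, by the proper discontinuity just established, that $M \to M/G$ is a covering onto a manifold as well. The bijection $\Psi$ intertwines these quotient maps, so by the universal property of the quotient it is smooth with smooth inverse, hence a diffeomorphism.
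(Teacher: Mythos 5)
Your proposal is correct and follows essentially the same route as the paper's proof: the same induced action $(gD)\cdot(Dx):=D(gx)$ and orbit bijection, the same two-step freeness argument, proper discontinuity via finitely many cosets $g_iD$ meeting $\pi_D(K)$ and then finitely many $d\in D$ per coset, and smoothness of the bijection via the universal property of quotient maps (the paper cites Lee, Proposition 7.17). You are in fact slightly more explicit than the paper at two points --- checking well-definedness of the $G/D$-action and naming the compact set $L:=K\cup g_i^{-1}K$ used for the finiteness step inside each coset --- but these are refinements of the same argument, not a different approach.
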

\begin{proof}
The action of $G/D$ on $M / D$ is given by $g D \cdot D x := D g x$ for any $(g, x) \in G \times M$.

We define $\phi : (M/D)/(G/D) \to M/G$ by $\phi (G/D \cdot D x) := G x$ for any $x \in M$. This map is clearly surjective. In addition, if there are $(x,y) \in M^2$ such that $G x = G y$, there exists $g \in G$ such that $g x = y$, implying $g D \cdot D x = D y$ and then $G/D \cdot D x = G/D \cdot D y$, so $\phi$ is one-to-one.

Now, assume that $D$ and $G/D$ act freely and properly discontinuously on $M$ and $M/D$ respectively. Let $g \in G$ and $x \in M$ such that $g x = x$. Then, $g D \cdot D x = D x$, so $g D = 1_{G/D}$ because $G/D$ acts freely on $M/D$, implying $g \in D$, and $g = 1_G$ because $D$ acts freely on $M$. Thus $G$ acts freely on $M$.

To see that $G$ acts properly discontinuously on $M$, we pick a compact $K \subset M$. Let $g \in G$ satisfying $(g K) \cap K \neq \emptyset$. Since $D K$ is a compact subset of the manifold $M/D$, the set
$\{ g' D \in G/D \ \vert \ g' D \cdot (D K) \cap (D K) \neq \emptyset \}$ is finite: let $(g_j D)_{ j \in J}$ be the family of its elements, where $J$ is a finite set. Now, since $(g K) \cap K \ne \emptyset$, we also have $g D \cdot D K \cap D K \neq \emptyset$, so there is $j \in J$ such that $g_j D = g D$. This show that we can find $d \in D$ with $d g_j = g$ because $D$ is normal. Then, $(d g_j K) \cap K \neq \emptyset$. But there are only finitely many elements $d \in D$ satisfying this property because $D$ acts properly discontinuously on $M$. Let $(d_{j,i})_{i \in I_j}$ be the family of these elements, where $I_j$ is a finite set for every $j \in J$. Consequently, there exist $j \in J$ and $i \in I_j$ such that $g = d_{j,i} g_j$, and conversely any element of this form satisfy $(g K) \cap K \neq \emptyset$. Thus,
\[
\vert \{ g \in G \ \vert \ (g K) \cap K \neq \emptyset \} \vert = \sum\limits_{j \in J} \vert I_j \vert < + \infty,
\]
so $G$ acts properly discontinuously on $M$.

Finally, denote by $\pi_D : M \to M / D$, $\pi_{G/D} : M/D \to (M/D)/(G/D)$ and by $\pi_G : M \to M/G$ the canonical projections. One has the following commutative diagram:
\begin{equation}
\begin{tikzcd}
M \arrow[rdd, "\phi \circ \pi_{G/D} \circ \pi_D = \pi_G"] \arrow[dd, "\pi_{G/D} \circ \pi_D"'] \\
\\
(M/D)/(G/D) \arrow[r, "\phi"] & M/G
\end{tikzcd}
\end{equation}
and \cite[Theorem 4.29]{Lee} implies that $\phi$ is smooth.
\end{proof}

\subsection{General construction} \label{genconstruction}
Inspired by the known examples, we will now make a more general construction which includes all the models of LCP manifolds previously described.

Let $\Bar{\mathcal N}$ be a compact manifold. We will denote by $\mathcal N$ its universal cover and by $\Gamma$ its fundamental group, so $\Bar{\mathcal N} \simeq \mathcal N / \Gamma$. Let $p \in \NN$ and let $\phi : \Gamma \to \mathrm{Aff}_p (\ZZ)$ be a group morphism, where $\mathrm{Aff}_p (\ZZ) := \RR^p \rtimes \mathrm{GL}_p (\ZZ)$ is the set of affine transformations of $\RR^p$ with linear part in $\mathrm{GL}_p (\ZZ)$. We denote by $\phi_L : \Gamma \to \mathrm{GL}_p (\ZZ)$ the group morphism associating to $\gamma \in \Gamma$ the linear part of $\phi (\gamma)$.

We consider the simply connected manifold $\Tilde M := \RR^p \times \mathcal N$. Let $D \simeq \ZZ^p$ be the group of translations $\Tilde M \ni (a,x) \mapsto (a + z, x)$ for $z \in \ZZ^p$. Let $H$ be the group defined by $H := (\phi, \mathrm{id}) (\Gamma) = \lbrace (\phi (\gamma), \gamma) \vert \gamma \in \Gamma \rbrace \subset \mathrm{Aff}_p(\ZZ) \times \Gamma \subset \mathrm{Diff} (\Tilde M)$. Let $G$ be the subgroup of $\mathrm{Diff} (\Tilde M)$ generated by $D$ and $H$. It is clear that $D$ is a normal subgroup of $G$ and $G := D \rtimes H$. We claim that $G$ acts freely, properly discontinuously and cocompactly on $\Tilde M$. Indeed, one has $\Tilde M / D \simeq (S^1)^p \times \mathcal N$ and $H$ acts freely on this quotient because $\Gamma$ acts freely on $\mathcal N$. Moreover, $H$ also acts properly discontinuously because the map $(S^1)^p \times \mathcal N \to \mathcal N$ being proper and $H$ acting separately on $\RR^p$ and $\mathcal N$, it is sufficient to observe that $\Gamma$ acts properly on $\mathcal N$. In addition, this action is cocompact because $\Gamma$ acts cocompactly on $\mathcal N$ and $(S^1)^p$ is compact. Altogether, by Proposition~\ref{quotientsem} the quotient $\Tilde M / G$ is a compact manifold which we denote by $Q(\Bar{\mathcal N}, \phi)$, and whose fundamental group is $G$.

We now wish to construct an LCP structure on $Q (\Bar{\mathcal N}, \phi)$. To do so, we assume that the following conditions hold:

\begin{enumerate}[label=(\subscript{J}{{\arabic*}})]
\item there exist $\delta \in \NN$, a decomposition $\RR^p =: E_1 \oplus \ldots \oplus E_\delta$ stabilized by the action of $\phi_L (\Gamma)$, and a positive definite bilinear form $b$ on $\RR^p$ such that the previous decomposition is orthogonal with respect to $b$ and for any $1 \le k \le \delta$, the restriction of $\phi_L (\Gamma)$ to $(E_k, b \vert_{E_k})$ consists of similarities;
\item $O (E_1, b \vert_{E_1})$ does not contain $\phi_L (\Gamma) \vert_{E_1}$.
\end{enumerate}

\begin{rem} \label{dimension}
In particular, condition $(J_1)$ allows us to define a group morphism $\Lambda : \Gamma \to (\R_+^*)^\delta$ which associates to any $\gamma \in \Gamma$ the $\delta$-tuple given by the similarity ratios of $\phi_L(\gamma) \vert_{E_1}, \ldots, \phi_L (\gamma) \vert_{E_\delta}$. For any $1 \le k \le \delta$, $\Lambda_k$ will denote the $k$-th coordinate of $\Lambda$. Condition $(J_2)$ implies that $2 \le p$. Indeed, if $p = 1$, $\phi_L (\Gamma) \subset \{\pm 1\} = O (\RR^p, b)$.
In addition, from $(J_2)$ we also deduce that $2 \le \delta$, because otherwise $\RR^p = E_1$ and there would exist an element $\gamma \in \Gamma$ such that $\pm 1 \neq \Lambda_1 (\gamma)^p = \det \phi_L (\gamma) = \pm 1$, which is absurd. In particular, $Q (\Bar{\mathcal N}, \phi)$ has dimension at least $3$.
\end{rem}

We will need the following standard lemma:

\begin{lemma} \label{equifun}
Let $\mathcal Z$ be a smooth manifold on which a group $\Gamma'$ acts freely and properly discontinuously, so in particular $\mathcal Z / \Gamma'$ is a smooth manifold. Let $\rho : \Gamma' \to \RR_+^*$ be a group morphism. Then, there exists a function $f \in C^\infty (\mathcal Z, \RR)$ such that for any $\gamma \in \Gamma'$, $\gamma^* e^{2 f} = \rho (\gamma)^2 e^{2 f}$.
\end{lemma}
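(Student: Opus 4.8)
The plan is to descend to the quotient $B := \mathcal Z / \Gamma'$ and build $f$ by gluing locally defined, locally constant pieces with a partition of unity. First I would reformulate the statement. Writing $\psi := \ln \rho : \Gamma' \to (\RR, +)$, which is a group homomorphism because $\rho$ is, the condition $\gamma^* e^{2f} = \rho(\gamma)^2 e^{2f}$ is equivalent to the additive cocycle identity $f(\gamma z) = f(z) + \psi(\gamma)$ for all $\gamma \in \Gamma'$ and $z \in \mathcal Z$. Since $\Gamma'$ acts freely and properly discontinuously, the canonical projection $p : \mathcal Z \to B$ is a covering map onto the smooth manifold $B$, which is in particular paracompact, so partitions of unity on $B$ exist.

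Next I would produce the local building blocks. Choose a locally finite open cover $(V_j)_j$ of $B$ by evenly covered sets, a subordinate partition of unity $(\chi_j)_j$, and for each $j$ a smooth section $s_j : V_j \to \mathcal Z$ of $p$, which exists because $V_j$ is evenly covered. For $z \in p^{-1}(V_j)$ there is a unique element $\gamma_j(z) \in \Gamma'$ with $z = \gamma_j(z) \cdot s_j(p(z))$, by freeness of the action; I set $f_j(z) := \psi(\gamma_j(z))$. On each sheet $\gamma \cdot s_j(V_j)$ the index $\gamma_j$ is constant, so $f_j$ is smooth (indeed locally constant) on $p^{-1}(V_j)$, and since $p(\gamma z) = p(z)$ and the action is free one gets $\gamma_j(\gamma z) = \gamma\, \gamma_j(z)$, hence $f_j(\gamma z) = f_j(z) + \psi(\gamma)$.

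Finally I would glue. Define $f := \sum_j (\chi_j \circ p)\, f_j$, where each summand is extended by zero outside $p^{-1}(V_j)$; because $\chi_j$ is supported away from $\partial V_j$, each product $(\chi_j \circ p)\, f_j$ is smooth on all of $\mathcal Z$, and local finiteness of $(p^{-1}(V_j))_j$ makes the sum smooth. The equivariance then follows from $\sum_j \chi_j \equiv 1$:
\[
f(\gamma z) = \sum_j \chi_j(p(z)) \big(f_j(z) + \psi(\gamma)\big) = f(z) + \psi(\gamma),
\]
so that $\gamma^* e^{2f} = \rho(\gamma)^2 e^{2f}$, as desired. The only delicate point is the smoothness bookkeeping: that each $f_j$ is genuinely smooth (it is locally constant on the sheets over $V_j$), that the cutoff products extend smoothly by zero across $\partial p^{-1}(V_j)$, and that the gluing sum is locally finite. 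All three are routine consequences of $B$ being a manifold and the chosen cover being locally finite, and no analytic estimate is required. An equivalent route would be to average directly on $\mathcal Z$ by setting $e^{2f(z)} := \sum_{\gamma \in \Gamma'} \rho(\gamma)^{-2}\, \Psi(\gamma z)$ for a nonnegative $\Psi$ with $\sum_{\gamma} \Psi(\gamma z) \equiv 1$, but the section-based gluing keeps the smoothness transparent and avoids convergence concerns.
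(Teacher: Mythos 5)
Your proof is correct, but it takes a genuinely different route from the paper's. The paper argues bundle-theoretically: it forms the associated line bundle $L := \mathcal Z \times_{\rho^{-1}} \RR$ over $\mathcal Z / \Gamma'$, invokes the fact that every orientable real line bundle over a manifold is trivial to obtain a nowhere-vanishing smooth section $s$, and reads off $f$ from $s(\pi_{\mathcal Z}(x)) = [x, e^{f(x)}]$, the equivariance dropping out of the equivalence relation defining $L$. You instead solve the additive cocycle equation $f(\gamma z) = f(z) + \ln \rho(\gamma)$ by hand: locally constant solutions $f_j$ over evenly covered sets via deck-transformation bookkeeping, glued with a partition of unity, the key point being that the inhomogeneous term $\psi(\gamma)$ is reproduced exactly because $\sum_j \chi_j \equiv 1$ (the local solutions form an affine space over functions pulled back from the quotient, so convex combinations of solutions are again solutions). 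The two arguments are close cousins: your $e^{f_j}$ are precisely positive local sections of the paper's bundle $L$, and your gluing is in effect the standard partition-of-unity proof that an oriented line bundle admits a positive global section. What your version buys is self-containedness and transparency — every smoothness and equivariance claim is checked directly, and it is visible that only paracompactness of $\mathcal Z/\Gamma'$ is used — while the paper's version buys brevity by outsourcing the gluing to a classification fact. Your bookkeeping is sound: freeness gives both the uniqueness of $\gamma_j(z)$ and the relation $\gamma_j(\gamma z) = \gamma\, \gamma_j(z)$; the image of a continuous section of a covering map is open, so each $f_j$ is locally constant, hence smooth; local finiteness of $(V_j)$ pulls back under $p$; and $(\chi_j \circ p)\, f_j$ extends smoothly by zero because $\supp \chi_j \subset V_j$. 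Your closing aside on direct averaging is also correctly hedged: making $e^{2f(z)} := \sum_{\gamma} \rho(\gamma)^{-2} \Psi(\gamma z)$ work requires a Bruhat-type function $\Psi$ whose support meets each orbit in a locally finite way, a convergence issue that your section-based gluing sidesteps.
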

\begin{proof}
Let $\pi_\mathcal Z : \mathcal Z \to \mathcal Z / \Gamma'$ be the canonical submersion. We define the oriented line bundle $L := \mathcal Z \times_{\rho^{-1}} \RR$. Since any orientable line bundle is trivial, there exists $s : \mathcal Z / \Gamma' \to L$ a nowhere vanishing smooth section of $L$. Then, after replacing $s$ by $-s$ if necessary, there is a function $f : \mathcal Z \to \RR$ such that for all $x \in \mathcal Z$ one has $s (\pi_\mathcal Z (x)) = [x, e^{f (x)}]$. Moreover, for any $\gamma \in \Gamma'$, we have
\[
[x, e^{f (x)}] = s (\pi_\mathcal Z (x)) = s (\pi_\mathcal Z (\gamma x)) = [\gamma x, e^{f (\gamma x)}] = [x, \rho (\gamma)^{-1} e^{f (\gamma x)}],
\]
which implies $\rho (\gamma) e^{f (x)} = e^{f (\gamma x)}$, so the function $f$ has the desired equivariance property.
\end{proof}

We are now in position to construct an LCP structure on $Q (\Bar{\mathcal N}, \phi)$.

\begin{prop} \label{geneconst}
Under the assumptions $(J_1)$, $(J_2)$ there exists an LCP structure on $Q (\Bar{\mathcal N}, \phi)$. The LCP manifold obtained in this way has rank equal to $\mathrm{rk} (\Lambda_1 (\Gamma))$ and the flat part of its universal cover contains $E_1$.
\end{prop}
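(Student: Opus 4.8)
The plan is to build an explicit similarity structure $h$ on $\Tilde M = \RR^p\times\mathcal N = (E_1\oplus\cdots\oplus E_\delta)\times\mathcal N$ which makes $E_1$ a flat Riemannian factor and forces every element of $G$ to scale $h$ by the single ratio $\Lambda_1(\gamma)$. First I would fix the target global ratio to be $\mu:=\Lambda_1$ and correct, block by block, the discrepancy between $\Lambda_k$ and $\Lambda_1$ by a warping function depending only on $\mathcal N$. Since $\Gamma$ acts freely and properly discontinuously on $\mathcal N$, Lemma~\ref{equifun} applied to the morphisms $\rho_k:=\Lambda_1/\Lambda_k:\Gamma\to\RR_+^*$ (for $2\le k\le\delta$) and $\rho_0:=\Lambda_1$ yields smooth functions $f_2,\dots,f_\delta,f_0:\mathcal N\to\RR$ with $\gamma^*e^{2f_k}=\rho_k(\gamma)^2e^{2f_k}$; choosing in addition a $\Gamma$-invariant metric $\Tilde g_0$ on $\mathcal N$ (the pull-back of any metric on $\Bar{\mathcal N}$), I would define, pulling the $f_k$ back to $\Tilde M$ via the projection onto $\mathcal N$,
\begin{equation*}
h:=b\vert_{E_1}+\sum_{k=2}^\delta e^{2f_k}\,b\vert_{E_k}+e^{2f_0}\,\Tilde g_0 .
\end{equation*}

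Next I would check the equivariance. The group $D$ of integer translations of $\RR^p$ preserves $h$ because every coefficient is constant in the $\RR^p$-directions; hence $D$ acts by isometries. For $(\phi(\gamma),\gamma)\in H$, the decomposition $(J_1)$ is $\phi_L(\Gamma)$-stable and $b$-orthogonal, so there are no cross terms and one pulls back each summand separately: $(\phi(\gamma),\gamma)^*b\vert_{E_k}=\Lambda_k(\gamma)^2 b\vert_{E_k}$ while the warping contributes the factor $\rho_k(\gamma)^2$, so the $k$-th term is multiplied by $\Lambda_k(\gamma)^2\rho_k(\gamma)^2=\Lambda_1(\gamma)^2$; the same computation on $e^{2f_0}\Tilde g_0$, using $\gamma$-invariance of $\Tilde g_0$ and $\rho_0=\Lambda_1$, gives $\Lambda_1(\gamma)^2$, and the unwarped $E_1$-term scales by $\Lambda_1(\gamma)^2$ directly. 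Thus $(\phi(\gamma),\gamma)^*h=\Lambda_1(\gamma)^2 h$, and since $G$ is generated by $D$ and $H$, every element of $G$ is an $h$-similarity of ratio $\Lambda_1(\gamma)$. By $(J_2)$ some $\Lambda_1(\gamma)\neq 1$, so the similarity structure is non-Riemannian, i.e. the induced Weyl structure on $Q(\Bar{\mathcal N},\phi)$ is closed and non-exact.

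It then remains to recognize this as an LCP structure and to compute the two invariants. Because $f_2,\dots,f_\delta,f_0$ and $\Tilde g_0$ depend only on $\mathcal N$ and $b$ is orthogonal, $h$ splits as the Riemannian product $(E_1,b\vert_{E_1})\times(E',h')$ with $E':=(E_2\oplus\cdots\oplus E_\delta)\times\mathcal N$, where $(E_1,b\vert_{E_1})$ is a complete Euclidean space of dimension $\ge 1$ (Remark~\ref{dimension}). In particular $(\Tilde M,h)$ has a flat Riemannian factor $\RR$, so by Remark~\ref{CauchyborderLCP} the closed non-exact Weyl structure is an LCP structure, and the splitting shows that the flat part of the de Rham decomposition contains $E_1$. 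Finally, the similarity ratio is a morphism $G\to\RR_+^*$ that is trivial on $D$ and equals $\Lambda_1(\gamma)$ on $H$, so the subgroup of $\RR^*$ it generates is $\Lambda_1(\Gamma)$, whence the rank equals $\mathrm{rk}(\Lambda_1(\Gamma))$.

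The main obstacle I anticipate is confirming that we land in the third (LCP) case of Theorem~\ref{classWeylstruc} rather than in the flat case, and this is exactly where $(J_2)$ is indispensable, since it guarantees a genuine contraction $(\phi(\gamma_0),\gamma_0)\in G$ with $\Lambda_1(\gamma_0)<1$. Because $(E_1,b\vert_{E_1})$ is complete, the Cauchy border of $\Tilde M$ equals $E_1\times\partial E'$, which cannot reduce to the single point characteristic of a non-Riemannian flat similarity structure; concretely, the contraction restricts to a ratio-$\Lambda_1(\gamma_0)$ contraction of $(E',h')$, so a Banach-fixed-point argument shows that completeness of $E'$ would produce a fixed point of $(\phi(\gamma_0),\gamma_0)$ in $\Tilde M$, contradicting the freeness of the $G$-action. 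Hence $\partial E'\neq\emptyset$, the Cauchy border is infinite, and flatness is ruled out, as required by Remark~\ref{CauchyborderLCP}.
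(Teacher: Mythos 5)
Your proposal is correct and follows essentially the same route as the paper's proof: the identical warped metric $h = b\vert_{E_1} + \sum_{k=2}^{\delta} e^{2f_k}\, b\vert_{E_k} + e^{2f_0}\,\Tilde g_0$ built from Lemma~\ref{equifun} applied to $\Lambda_1$ and to $\rho_k = \Lambda_1/\Lambda_k$, the same block-by-block equivariance computation giving ratio $\Lambda_1(\gamma)$, non-exactness from $(J_2)$, and the conclusion via the flat Riemannian factor $E_1$ and Remark~\ref{CauchyborderLCP}. The only difference is that you unpack the content of that remark (the Banach fixed-point argument forcing incompleteness of the second factor and hence an infinite Cauchy border), which the paper simply cites.
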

\begin{proof}
Let $\bar g$ be any Riemannian metric on $\Bar{\mathcal N}$ and let $\Tilde g$ be its lift to $\mathcal N$. Let $f \in C^\infty (\mathcal N, \RR)$ be the function given by Lemma~\ref{equifun} applied to the morphism $\rho := \Lambda_1$. By definition, an element $\gamma \in \Gamma$ acts as a similarity of ratio $\Lambda_1 (\gamma)$ on $(\mathcal N, g := e^{2f} \Tilde g)$.

For any $2 \le k \le \delta$, we define the morphism 
\begin{align}
\rho_k : \Gamma \to \RR_+^*, && \gamma \mapsto \Lambda_1 (\gamma) / \Lambda_{k} (\gamma).
\end{align}
By Lemma~\ref{equifun}, we know that the set
\begin{equation}
\mathcal F_{eq} (k) := \lbrace f \in C^\infty (\mathcal N, \RR) \ \vert \ \forall \gamma \in \Gamma, \gamma^* e^{2 f} = \rho_k (\gamma)^2 e^{2 f} \rbrace.
\end{equation}
is non-empty.

We identify the tangent bundle $T \RR^p$
with $\RR^p \times \RR^p$ in the canonical way, and the bilinear form $b$ thus defines a Riemannian metric on $\RR^p$. Then, we define a metric $h$ on $\Tilde M = \RR^p \times \mathcal N$ by
\begin{equation}
h := b\vert_{E_1} + \sum\limits_{k = 2}^\delta e^{2 f_k} b\vert_{E_k} + g,
\end{equation}
where for all $2 \le k \le \delta$, $f_k \in \mathcal F_{eq} (k)$.

One clearly has for any $T \in D$ that $T^* h = h$. For any $\gamma \in \Gamma$, one has
\begin{align*}
(\phi (\gamma),\gamma)^* h =& \Lambda_1 (\gamma)^2 b\vert_{E_1} + \sum\limits_{k = 2}^\delta \gamma^* e^{2 f_k} \Lambda_k (\gamma)^2 b\vert_{E_k} + \gamma^* g \\
=& \Lambda_1 (\gamma)^2 b\vert_{E_1} + \sum\limits_{k = 2}^\delta \left(\frac{\Lambda_1 (\gamma)}{\Lambda_k (\gamma)}\right)^2 e^{2 f_k} \Lambda_k (\gamma)^2 b\vert_{E_k} + \Lambda_1 (\gamma)^2 g \\
=& \Lambda_1 (\gamma)^2 h.
\end{align*}

Since $G = D \rtimes H$, the elements of $G$ act as similarities, and $\Tilde g$ is a similarity structure on $Q (\Bar{\mathcal N}, \phi)$ which is not Riemannian because of condition $(J_2)$.

It remains to prove that $(\Tilde M, h)$ is non-flat with reducible holonomy. But $E_1$ is a Riemannian factor of $\Tilde M$, so the claim follows from Remark~\ref{CauchyborderLCP}, because the Cauchy border contains the set $E_1 \times \partial \mathcal N$ which is infinite.
\end{proof}

\begin{example}
We consider the matrix
\begin{align}
B := \left(
\begin{matrix}
1 & 1 \\
1 & 2
\end{matrix}
\right) \in \mathrm{SL}_2 (\ZZ).
\end{align}
Let $q \ge 1$. Let $A \in \mathrm{SL}_{2 q} (\ZZ)$ which is the matrix diagonal by blocks with $q$ times the block $B$. We consider a bilinear symmetric form $b_0$ on $\RR^2$ for which the two eigenspaces of $B$ are orthogonal, and we define the symmetric bilinear form $b := \bigoplus\limits_{k=1}^q b_0$ on $\RR^{2q}$. We consider $\Bar {\mathcal N} := S^1$, whose fundamental group is $\Gamma := \ZZ$, and the group morphism $\phi : \Gamma \to \mathrm{SL}_{2 q} (\ZZ)$, $n \mapsto A^n$. By Proposition~\ref{geneconst}, $Q (S^1, \phi)$ admits an LCP structure whose universal cover has a flat part of dimension $q$. Thus the dimension of the flat part can be any integer.
\end{example}

As an application of Proposition~\ref{geneconst}, we will show that on any OT-manifold (recall that they were defined in Example~\ref{OTex}) carries an LCP structure. The proof of this fact just relies on the remark that an OT-manifold is a particular case of the construction above.

\begin{cor} \label{LCPonOT}
Any OT-manifold $X(K, U)$ can be endowed with an LCP structure.
\end{cor}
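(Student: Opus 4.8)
The plan is to exhibit $X(K,U)$ as a manifold of the form $Q(\Bar{\mathcal N},\phi)$ studied in Section~\ref{genconstruction}, and then to verify that the hypotheses $(J_1)$ and $(J_2)$ hold, so that Proposition~\ref{geneconst} produces the desired LCP structure. First I would rewrite the universal cover. Using $H \cong \RR \times \RR_+^*$ via $z = x + iy \mapsto (x,y)$, I split the $H^s$ factor into its real and imaginary parts and group the real parts together with the whole $\CC^t$ factor, obtaining a diffeomorphism
\[
H^s \times \CC^t \;\cong\; \underbrace{(\RR^s \times \CC^t)}_{=:\,\RR^p}\;\times\;\underbrace{(\RR_+^*)^s}_{=:\,\mathcal N}, \qquad p = s + 2t,
\]
where $\RR^p \cong \RR^s \times \CC^t$ is the target of the geometric representation $\sigma$ and $\mathcal N$ records the imaginary parts $y_1,\dots,y_s$. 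I would then track the two actions of Example~\ref{OTex} through this identification. The additive action $z \mapsto z + \sigma(a)$ of $a \in \mathcal O_K$ is translation by the lattice vector $\sigma(a) \in \sigma(\mathcal O_K)$ on $\RR^p$ and trivial on $\mathcal N$; fixing a $\ZZ$-basis of the full lattice $\sigma(\mathcal O_K)\subset \RR^p$ identifies $\mathcal O_K$ with a translation group $D \cong \ZZ^p$. The multiplicative action $z \mapsto (\sigma_1(u)z_1,\dots,\sigma_{s+t}(u)z_{s+t})$ of $u \in U$ is $\RR$-linear on $\RR^p$ and, since $u\mathcal O_K = \mathcal O_K$, preserves $\sigma(\mathcal O_K)$, hence is given in the chosen basis by a matrix $\phi_L(u) \in \mathrm{GL}_p(\ZZ)$; on $\mathcal N$ it acts by $(y_i) \mapsto (\sigma_i(u)y_i)_{1\le i\le s}$.

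Next I would identify the base of the foliation. The condition imposed on $U$ in Example~\ref{OTex} is precisely that $u \mapsto (\ln\sigma_1(u),\dots,\ln\sigma_s(u))$ is injective with full-lattice image in $\RR^s$; passing to logarithms, the action of $U$ on $\mathcal N = (\RR_+^*)^s \cong \RR^s$ is the free, properly discontinuous, cocompact translation action by this lattice, so $\Bar{\mathcal N} := \mathcal N / U \cong \TT^s$ and $\Gamma := \pi_1(\Bar{\mathcal N}) = U \cong \ZZ^s$. Taking $\phi := \phi_L$ (with trivial translation part), this is exactly a morphism $\Gamma \to \mathrm{Aff}_p(\ZZ)$ as in the general construction, the normal translation subgroup $\mathcal O_K$ matches $D$, the unit group $U$ matches $H$, and $U \ltimes \mathcal O_K = D \rtimes H$; hence $X(K,U) = Q(\TT^s,\phi)$.

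It then remains to check $(J_1)$ and $(J_2)$. For $(J_1)$ I would decompose $\RR^p = \RR^s \times \CC^t$ into the $s$ one-dimensional real-embedding lines and the $t$ two-dimensional complex-embedding planes, so $\delta = s+t$, together with the standard Euclidean form $b$: these subspaces are $b$-orthogonal, and each $\phi_L(u)$ restricts to multiplication by $\sigma_i(u) > 0$ (a homothety) on a real line and by $\sigma_{s+j}(u) \in \CC^*$ (a rotation-homothety) on a complex plane, hence acts by similarities on every factor. For $(J_2)$ I would choose $E_1$ to be a real-embedding line on which the action is non-isometric: since the image of $U$ under $u \mapsto (\ln\sigma_i(u))_i$ is a nonzero (full-rank) lattice in $\RR^s$, some entry $\ln\sigma_{i_0}(u)$ is nonzero, and relabelling that line as $E_1$ gives $\sigma_1(u) \neq 1$, so $\phi_L(U)|_{E_1} \not\subset O(E_1,b|_{E_1}) = \{\pm 1\}$; this also records that $(J_2)$ forces $s \ge 1$, which holds for OT-manifolds. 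Proposition~\ref{geneconst} then endows $Q(\TT^s,\phi) = X(K,U)$ with an LCP structure.

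The main obstacle is the bookkeeping of the first two steps: translating the number-theoretic data (the geometric and logarithmic representations, and units acting on $\mathcal O_K$) into the exact format of the geometric construction, and in particular verifying that multiplication by a unit preserves $\sigma(\mathcal O_K)$ so that $\phi_L$ lands in $\mathrm{GL}_p(\ZZ)$, while the imaginary-part action descends to the torus $\TT^s$. Once the identification $X(K,U) = Q(\TT^s,\phi)$ is correctly set up, the verification of $(J_1)$ and $(J_2)$ is routine from the orthogonality of the embedding decomposition and the fullness of the unit lattice.
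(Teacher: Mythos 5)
Your proposal is correct and takes essentially the same route as the paper's proof: you identify $X(K,U)$ with $Q(\Bar{\mathcal N},\phi)$ by splitting $H^s \times \CC^t$ into the $\RR^{s+2t}$ factor carrying the lattice $\sigma(\mathcal O_K)$ (on which the units act by $\mathrm{GL}_{s+2t}(\ZZ)$ matrices in a lattice basis) and the $(\RR_+^*)^s$ factor on which $U$ acts as a full lattice of translations after taking logarithms, and you then verify $(J_1)$ with the same decomposition into $s$ real-embedding lines and $t$ complex-embedding planes and $(J_2)$ via a unit with nontrivial first real embedding, exactly as in the paper (which uses injectivity of $\sigma_1$ to produce $u$ with $\sigma_1(u) \in (0,1)$). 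The only cosmetic difference is that the paper sets $\mathcal N := \RR^s$ in logarithmic coordinates from the outset, whereas you keep $\mathcal N = (\RR_+^*)^s$ and pass to logarithms afterwards.
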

\begin{proof}
We use the notations of Example~\ref{OTex}. By definition one has
\[
X(K, U) = (H^s \times \CC^t) / (\mathcal O_K \rtimes U)
\]
so its universal cover is naturally isomorphic to $(\RR_+^*)^s \times \RR^s \times \RR^{2 t} \simeq \RR^s \times \RR^{s + 2 t}$ using the logarithm map. By construction, the group $\Gamma := p_{\RR^s} \circ \ell (U)$ acts freely, properly discontinuously and cocompactly on $\mathcal N := \RR^s$ because it is a full lattice. Moreover, $U$ is of rank $s$, so $\psi := (p_{\RR^s} \circ \ell)^{-1}$ is a group isomorphism between $\Gamma$ and $U$.

Let $\mathcal B = (e_1, \ldots, e_{s+2t})$ be the canonical basis of $\RR^{s + 2t}$. Let $\mathcal B'$ be a basis of the lattice $\sigma(\mathcal O_K)$, so in particular another basis of $\RR^{s + 2 t}$. With respect to the basis $\mathcal B'$, the action of $U$ restricted to $\RR^{s + 2 t}$ consists of multiplication by matrices of $\mathrm{GL}_{s+ 2t} (\ZZ)$ because $U$ preserves $\sigma(\mathcal O_K)$. This induces a group morphism $U \to \mathrm{GL}_{s+ 2t} (\ZZ)$ and then a group morphism $\phi : \Gamma \to \mathrm{GL}_{s+ 2t} (\ZZ)$ using the isomorphism $\psi$ between $\Gamma$ and $U$. Consequently, $X (K, U) \simeq Q (\mathcal N / \Gamma, \phi)$.

It is now sufficient to check that conditions $(J_1)$, $(J_2)$ hold, so we can apply Proposition~\ref{geneconst} to conclude. Let $b$ be the Euclidean metric on $\RR^{s + 2t}$ for which $\mathcal B$ is orthonormal. By construction, for any $\gamma \in \Gamma$, the matrix of $\phi (\gamma)$ in the basis $\mathcal B$ is of the form
\begin{equation} \label{matricebaseB}
\left(
\begin{matrix}
\sigma_1 (u) & & & & & \\
 & \ddots & & & & \\
 & & \sigma_s (u) & & &\\
& & & \vert \sigma_{s+1} (u) \vert O_1(u) & & \\
& & & & \ddots & \\
& & & & & \vert \sigma_{s+t} (u) \vert O_t(u)
\end{matrix}
\right)
\end{equation}
where $u \in \mathcal O^{\times,+}_K$ and $O_1(u), \ldots, O_t(u) \in \mathrm{SO}_2 (\RR)$ are the rotations induced by the multiplications by the complex numbers $\sigma_{s+1} (u) / \vert \sigma_{s+1} (u)\vert$ on $\CC$. Then, the spaces
\[
E_j := \Span (e_j)
\]
for $1 \le j \le s$ and
\[
E_{s + j} := \Span (e_{s+ 2 j - 1}, e_{s+2 j})
\]
for $1 \le j \le t$ give a decomposition of $\RR^{s + 2t}$ in orthogonal subspaces stable by the action of $\phi (\Gamma)$, so $(J_1)$ is verified because of the form of the matrix \eqref{matricebaseB}. Finally, $\sigma_1$ is injective so for any $u \in U$, $\sigma_1 (u) = 1$ implies $u = 1$. Thus there exists $u \in U$ such that $\sigma_1 (u) \in (0,1)$ (because we recall that $\sigma_1(u) > 0$ by construction) so $(J_2)$ holds.
\end{proof}

It is important to notice that the LCP metrics constructed by using the proof of Proposition~\ref{geneconst} on OT-manifolds with the approach of Corollary~\ref{LCPonOT} do not contain the LCK structures introduced in \cite{OT} when $t = 1$. However, we can extend the family of LCP metrics defined in the proof of Proposition~\ref{geneconst}. For any $2 \le k,k' \le \delta$ with $k \neq k'$, consider the morphism
\begin{align}
\rho_{k,k'} : \Gamma \to \RR_+^*, && \gamma \mapsto \Lambda_1 (\gamma) / \sqrt{\Lambda_{k} \Lambda_{k'}} (\gamma),
\end{align}
and let $b_{k,k'} : E_k \times E_{k'} \to \RR$ be a bilinear form satisfying $\phi (\gamma)^* b_{k,k'} = \sqrt{\Lambda_{k} \Lambda_{k'}} (\gamma) b_{k,k'}$ for any $\gamma \in \Gamma$ (such forms always exist, since we can take $b_{k,k'} = 0$), and let $f_{k,k'}$ be an element of the set
\begin{equation}
\mathcal F_{eq} (k,k') := \lbrace f \in C^\infty (\mathcal N, \RR) \ \vert \ \forall \gamma \in \Gamma, \gamma^* e^{2 f} = \rho_{k,k'} (\gamma)^2 e^{2 f} \rbrace.
\end{equation}
Then, we consider the metric $h$ on $\RR^p \times \mathcal N$ defined by
\begin{equation}
h := b\vert_{E_1} + \sum\limits_{k = 2}^\delta e^{2 f_k} b\vert_{E_k} + \sum\limits_{k = 2}^\delta \sum\limits_{k' = 2}^\delta e^{2 f_{k,k'}} b_{k,k'} + g.
\end{equation}
If the functions $f_{k,k'}$ are taken small enough on a relatively compact fundamental domain of $\mathcal N$, $h$ is positive definite, and an argument similar to the one used in Proposition~\ref{geneconst} shows that the elements of the group $G$ act as $h$-similarities.

On an OT-manifold with $t = 1$, the LCK metric on its universal cover $H^s \times \CC$ defined in \cite{OT} is of the form
\begin{equation}
h := \left( \prod\limits_{j = 1}^s \frac{1}{2 y_j} \right) \left( \sum\limits_{k, k' = 1}^s \frac{1 + \delta_k^{k'}}{4 y_k y_{k'}} (d x_k \otimes d x_{k'} + d y_k \otimes d y_{k'}) \right) + d x_{s+1}^2 + d y_{s+1}^2,
\end{equation}
where $z_k := x_k + i y_k$, $1 \le k \le s+1$ are the canonical complex coordinates and $\delta_k^{k'}$ is the Kronecker symbol. This falls on the construction above by taking $E_1 := \Span(e_{s+1}, e_{s+2})$ (with the basis introduced in Corollary~\ref{LCPonOT}), together with the functions
\[
f_{k,k'} :=  \left( \prod\limits_{j = 1}^s \frac{1}{2 y_j} \right) \frac{1}{y_k y_{k'}}
\]
and the bilinear forms $b_{k,k'} := d x_k \otimes d x_{k'}$.

\subsection{Rank of an LCP manifold} Our next goal is to construct LCP manifolds of arbitrary rank using again Proposition~\ref{geneconst} again. For this purpose, we need a special family of commuting matrices, which will be constructed by means of number theory. This makes the object of the two following two lemmas:

\begin{lemma} \label{exfield}
For any $n \in \NN$ there exists a cyclic, totally real and monogenic algebraic number field of degree $p \ge n + 1$.
\end{lemma}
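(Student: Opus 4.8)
The plan is to produce, for a prescribed $n$, a number field that is simultaneously cyclic (Galois with cyclic Galois group), totally real, and monogenic, with degree at least $n+1$. The natural source of cyclic totally real fields is cyclotomic theory: the field $\QQ[\zeta_m]$ (with $\zeta_m$ a primitive $m$-th root of unity) is abelian over $\QQ$ with Galois group $(\ZZ/m\ZZ)^\times$, and its maximal real subfield $\QQ[\zeta_m + \zeta_m^{-1}]$ is totally real of degree $\varphi(m)/2$. To get a \emph{cyclic} extension I would choose $m = \ell$ a prime, so that $(\ZZ/\ell\ZZ)^\times$ is cyclic of order $\ell-1$; then every subgroup is cyclic and every quotient is cyclic, so the real subfield $K := \QQ[\zeta_\ell + \zeta_\ell^{-1}]$ is cyclic over $\QQ$ of degree $(\ell-1)/2$. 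By Dirichlet's theorem on primes in arithmetic progressions (or simply by taking $\ell$ large) I can force $(\ell-1)/2 \ge n+1$, which handles the degree requirement.

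The remaining and most delicate point is \textbf{monogenicity}: I must exhibit an element $\alpha \in K$ with $\mathcal O_K = \ZZ[\alpha]$. For the maximal real cyclotomic field this is a known structural fact: the ring of integers of $\QQ[\zeta_\ell]$ is $\ZZ[\zeta_\ell]$, and correspondingly the ring of integers of the real subfield $\QQ[\zeta_\ell + \zeta_\ell^{-1}]$ is $\ZZ[\zeta_\ell + \zeta_\ell^{-1}]$. Thus $\alpha := \zeta_\ell + \zeta_\ell^{-1} = 2\cos(2\pi/\ell)$ is a power integral generator, giving $\mathcal O_K = \ZZ[\alpha]$ and establishing that $K$ is monogenic. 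I would cite the standard computation of the ring of integers of the maximal real subfield of a prime cyclotomic field (for instance via the discriminant computation or Washington's treatment of cyclotomic fields) rather than reproving it.

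Putting the pieces together, the proof is short: \textbf{first}, given $n$, pick a prime $\ell$ with $(\ell-1)/2 \ge n+1$; \textbf{second}, set $K = \QQ[\zeta_\ell + \zeta_\ell^{-1}]$ and observe $[K:\QQ] = (\ell-1)/2 =: p \ge n+1$; \textbf{third}, note $K$ is totally real since its embeddings send $\zeta_\ell + \zeta_\ell^{-1}$ to the real numbers $2\cos(2\pi j/\ell)$; \textbf{fourth}, note $K/\QQ$ is cyclic as a quotient of the cyclic group $(\ZZ/\ell\ZZ)^\times$; and \textbf{fifth}, invoke $\mathcal O_K = \ZZ[\zeta_\ell + \zeta_\ell^{-1}]$ to conclude monogenicity. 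The \emph{main obstacle} is genuinely just the monogenicity step, since cyclicity, total reality, and the degree bound all follow immediately from elementary cyclotomic Galois theory; the ring-of-integers identification is where one relies on a nontrivial (but classical) theorem about cyclotomic fields.
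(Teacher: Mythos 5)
Your proposal is correct and coincides with the paper's own proof: both take the maximal real subfield $\QQ[\zeta_\ell + \zeta_\ell^{-1}]$ of the $\ell$-th cyclotomic field for a prime $\ell \ge 2n+3$, obtaining cyclicity from $(\ZZ/\ell\ZZ)^\times$, total reality and the degree bound directly, and monogenicity from the classical fact $\mathcal O_K = \ZZ[\zeta_\ell + \zeta_\ell^{-1}]$ (the paper cites Washington's Proposition 2.16, exactly the result you invoke).
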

\begin{proof}
Let $n \in \NN$, and let $m \ge 2 n + 3$ be a prime number. Let $K$ be the maximal real subfield of the $m$-th cyclotomic extension. Then $K$ is an extension of $\QQ$ of degree $p := (m-1) / 2 \ge n + 1$, which is totally real, monogenic by \cite[Proposition 2.16]{Wash}, and cyclic.
\end{proof}

\begin{lemma} \label{dmatrix}
Let $n \ge 2$. There exists an integer $p \ge n + 1$ and diagonalizable matrices $A_1, \ldots, A_n \in \mathrm{GL}_p (\ZZ)$ with the following properties:
\begin{itemize}
\item The matrices $A_1, \ldots, A_n$ commute, so their are codiagonalizable.
\item Let $(e_1, \ldots, e_p)$ be a common basis of diagonalization for $A_1, \ldots, A_n$. For any $1 \le k \le p$, let $E_k = \Span (e_k)$, and denote by $\lambda_k (A_l)$ the eigenvalue of $A_l$ associated to the eigenspace $E_k$. Then, the subgroup $\langle \vert \lambda_1 (A_1) \vert, \ldots, \vert \lambda_1 (A_n) \vert \rangle$ of $\RR_+^*$ has rank $n$.
\end{itemize}
\end{lemma}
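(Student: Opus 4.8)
The plan is to realize the desired matrices as the multiplication-by-unit maps on the ring of integers of the number field provided by Lemma~\ref{exfield}. First I would take a prime $m \ge 2n+3$ and let $K$ be the maximal real subfield of the $m$-th cyclotomic extension, which by Lemma~\ref{exfield} is a totally real, cyclic, monogenic field of degree $p \ge n+1$. Since $K$ is monogenic, I fix $\alpha \in \cO_K$ with $\cO_K = \ZZ[\alpha]$, so that $(1, \alpha, \ldots, \alpha^{p-1})$ is a $\ZZ$-basis of $\cO_K$. For any unit $u \in \cO_K^\times$, multiplication $m_u : x \mapsto ux$ preserves $\cO_K$, hence its matrix $A_u$ in this integral basis lies in $\mathrm{GL}_p(\ZZ)$ (its inverse being $A_{u^{-1}}$). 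Because $K$ is totally real of degree $p$, there are $p$ distinct real embeddings $\tau_1, \ldots, \tau_p$, and the simultaneous eigenvectors of all the $A_u$ are the images of the standard basis under the Vandermonde change of basis $(\tau_i(\alpha^{j-1}))_{i,j}$; the eigenvalue of $A_u$ on the $k$-th eigenline is exactly $\tau_k(u)$. In particular all these matrices are diagonalizable over $\RR$, they pairwise commute (since multiplication in a commutative ring commutes), and they are codiagonalizable in the common eigenbasis $(e_1, \ldots, e_p)$ independent of $u$.

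The remaining point is to choose units $u_1, \ldots, u_n$ so that the subgroup $\langle \lambda_1(A_{u_1}), \ldots, \lambda_1(A_{u_n})\rangle = \langle \tau_1(u_1), \ldots, \tau_1(u_n)\rangle$ of $\RR_+^*$ has rank exactly $n$. Here the key input is Dirichlet's unit theorem (Theorem~\ref{Dirichlet}): since $K$ is totally real of degree $p$, we have $s = p$ and $t = 0$, so $\cO_K^\times \simeq \{\pm 1\} \oplus \ZZ^{p-1}$ has free rank $p-1 \ge n$. I would fix fundamental units $\eta_1, \ldots, \eta_{p-1}$ generating the free part. To guarantee positivity of the first embedding I replace each $\eta_i$ by $\eta_i^2$ if necessary, or simply work with squares throughout, so that $\tau_1(u_i) > 0$ for the chosen units; the squares $\eta_1^2, \ldots, \eta_{p-1}^2$ still generate a finite-index (hence rank $p-1$) subgroup of the free part.

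The main obstacle, and the genuinely arithmetic step, is to ensure that the images under the single embedding $\tau_1$ remain multiplicatively independent, i.e. that the composition $\langle \eta_1^2, \ldots, \eta_n^2 \rangle \hookrightarrow \cO_K^\times \xrightarrow{\tau_1} \RR_+^*$ is injective on a rank-$n$ free subgroup. Multiplicative independence of the $\tau_1(\eta_i^2)$ is equivalent to injectivity of the map $\ell_1 : u \mapsto \ln|\tau_1(u)|$ on the chosen subgroup, and this can fail because $\ker \ell_1$ could be nontrivial. To handle this I would pass to the full logarithmic embedding $L : u \mapsto (\ln|\tau_1(u)|, \ldots, \ln|\tau_p(u)|)$, whose image is a full-rank lattice $\Lambda$ in the trace-zero hyperplane $\{\sum x_k = 0\} \subset \RR^p$ of dimension $p-1$. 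Composing with the coordinate projection $\pi_1$ onto the first factor, the kernel $\pi_1(L(\cdot)) = 0$ corresponds to $\Lambda \cap \{x_1 = 0\}$, a sublattice of rank at most $p-2$. Hence I can select $n$ lattice vectors in $\Lambda$ whose first coordinates are linearly independent over $\QQ$—concretely, choose a basis of $\Lambda$ and discard at most one vector lying in $\{x_1 = 0\}$, using $p - 1 \ge n+1 > n$ to retain enough. Pulling these back to units $u_1, \ldots, u_n$ (and squaring for positivity) yields $\tau_1(u_1), \ldots, \tau_1(u_n)$ generating a rank-$n$ subgroup of $\RR_+^*$. Setting $A_i := A_{u_i}$ then furnishes the required commuting, codiagonalizable matrices in $\mathrm{GL}_p(\ZZ)$ with $\langle \lambda_1(A_1), \ldots, \lambda_1(A_n)\rangle$ of rank $n$, completing the proof.
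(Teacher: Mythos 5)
Your matrices are the paper's: the paper takes $A$ to be the companion matrix of the minimal polynomial of $\alpha$ and sets $A_l := P_l(A)$ where $P_l(\alpha) = u_l$, which is precisely your multiplication operator $m_{u_l}$ written in the power basis $(1, \alpha, \ldots, \alpha^{p-1})$ of $\mathcal{O}_K = \ZZ[\alpha]$; the diagonalization over $\RR$ with eigenvalues the conjugates of the units is also the same in both. (Your explicit handling of positivity by squaring is a point the paper passes over in silence, and is a small improvement.)

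However, the last step of your argument — the only genuinely arithmetic one — is flawed as written. The worry that $\ker \ell_1$ could be nontrivial is vacuous here: $\tau_1$ is a \emph{real} embedding, so $\ln|\tau_1(u)| = 0$ forces $\tau_1(u) = \pm 1$, and injectivity of the field embedding gives $u = \pm 1$; hence $\Lambda \cap \{x_1 = 0\} = \{0\}$, not a sublattice of rank up to $p-2$. (A unit can have modulus one at a complex place without being torsion, but never at a real place.) Moreover, your proposed remedy would not work even if the worry were justified: discarding from a basis of $\Lambda$ the vectors lying in $\{x_1 = 0\}$ does not make the first coordinates of the remaining vectors $\QQ$-linearly independent, since a vanishing $\ZZ$-combination of first coordinates corresponds to a vector of $\Lambda \cap \{x_1 = 0\}$ that need not involve any basis vector with zero first coordinate; and the count is off as well, because Lemma~\ref{exfield} only guarantees $p \ge n+1$, i.e.\ $p - 1 \ge n$, so discarding even one basis vector could leave $n-1 < n$ of them. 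The correct one-line argument, which the paper uses implicitly, is: if $\prod_i \tau_1(u_i)^{a_i} = 1$ in $\RR$, then $\tau_1\bigl(\prod_i u_i^{a_i}\bigr) = 1$, so $\prod_i u_i^{a_i} = 1$ in $K$ by injectivity of $\tau_1$, contradicting the multiplicative independence of $u_1, \ldots, u_n$ modulo torsion; thus any $n$ independent units already have multiplicatively independent images, no selection needed. With that substitution your proof is complete and coincides with the paper's.
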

\begin{proof}
Let $K$ be a cyclic, totally real and monogenic algebraic number field of degree $p \ge n + 1$, which exists by Lemma~\ref{exfield}. There is an algebraic integer $\alpha$ such that $\alpha$ generates a power basis of $K$, in particular $K = \QQ [\alpha]$. By Dirichlet's units theorem, the group of units of $\QQ[\alpha]$ has rank $p - 1$. Since $p-1 \ge n$, we can take $n$ independent fundamental units $u_1, \ldots, u_n$ in $\QQ [\alpha]$. By monogeneity, there are polynomials $P_1, \ldots, P_n \in \ZZ_{p-1} [X]$ such that $P_l (\alpha) = u_l$ for any $1 \le l \le n$.

Now, let $A \in \mathrm{GL}_p(\ZZ)$ be the companion matrix of the minimal polynomial of $\alpha$ and let $A_l := P_l (A) \in \mathrm{GL}_p(\ZZ)$ for $1 \le l \le n$. Since the minimal polynomial of $\alpha$ is irreducible over $\QQ$, it is separable and $A$ is diagonalizable in $\RR$, with eigenvalues equal to the conjugates of $\alpha$, namely $\alpha, \sigma (\alpha), \ldots, \sigma^{p-1}(\alpha)$, where $\sigma$ is a generator of the (cyclic) Galois group of $\QQ [\alpha]$. Then, the matrices $A_l$ are diagonalizable with eigenvalues $u_l, \sigma(u_l), \ldots, \sigma^{p-1}(u_l)$. Moreover, their determinants are $\Pi_{k=0}^{p-1} \sigma^k (u_l) = \pm 1$ because $u_l$ is a unit.

Finally, let $e_1$ be an eigenvector of $A$ for the eigenvalue $\alpha$. Then, $E_1 := \Span (e_1)$ is a one-dimensional eigenspace of any $A_l$ for the eigenvalue $u_l$, and $\langle u_1, \ldots, u_n \rangle$ is of rank $n$. We can complete $(e_1)$ in a basis of diagonalization of $A$ to obtain the last property of the lemma.
\end{proof}

The matrices defined in Lemma~\ref{dmatrix} will be used to define the morphism $\phi$ needed for the construction of Proposition~\ref{geneconst}, so we prove the following:

\begin{prop} \label{rankLCP}
Let $n \ge 1$. Let $p \ge n +1$ and $A_1, \ldots, A_n \in \mathrm{GL}_p (\ZZ)$ be the matrices given by Lemma~\ref{dmatrix}. The group $H := \langle A_1, \ldots, A_n \rangle$ is canonically isomorphic to $\ZZ^n$, defining a group isomorphism $\phi : \ZZ^n \to H$. Then, there exists a LCP structure on $Q ((S^1)^n, \phi)$ of rank $n$.

In particular, the rank of an LCP manifold can be any positive integer.
\end{prop}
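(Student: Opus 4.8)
The plan is to realize $Q((S^1)^n, \phi)$ as a special case of the general construction of Subsection~\ref{genconstruction} and then invoke Proposition~\ref{geneconst}; the only real work is to check that the hypotheses $(J_1)$ and $(J_2)$ hold and that the resulting rank equals $n$. Throughout I take $\Bar{\mathcal N} = (S^1)^n$, so that $\mathcal N = \RR^n$ and $\Gamma = \ZZ^n$, and I regard $\phi : \ZZ^n \to H \subset \mathrm{GL}_p(\ZZ) \subset \mathrm{Aff}_p(\ZZ)$ as a morphism with trivial translation part, so $\phi_L = \phi$. Since Lemma~\ref{dmatrix} requires $n \ge 2$, I first dispose of the case $n = 1$, which is already covered by the rank-one examples of Example~\ref{exKou} and the OT-manifolds with $s = t = 1$; hence I may assume $n \ge 2$ and use the matrices $A_1, \ldots, A_n$ of Lemma~\ref{dmatrix}.

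First I would justify the canonical isomorphism $\phi : \ZZ^n \to H$. As the $A_l$ commute, the assignment $(m_1, \ldots, m_n) \mapsto A_1^{m_1} \cdots A_n^{m_n}$ is a surjective morphism $\ZZ^n \to H$. For injectivity I restrict a relation $A_1^{m_1} \cdots A_n^{m_n} = \id$ to the line $E_1 = \Span(e_1)$ furnished by Lemma~\ref{dmatrix}: it becomes $\prod_{l=1}^n \lambda_1(A_l)^{m_l} = 1$, and because $\langle \lambda_1(A_1), \ldots, \lambda_1(A_n)\rangle$ has rank $n$, all the $m_l$ vanish. Thus $\phi$ is an isomorphism.

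Next I would verify $(J_1)$ and $(J_2)$. Let $(e_1, \ldots, e_p)$ be the common eigenbasis from Lemma~\ref{dmatrix}, set $\delta := p$ and $E_k := \Span(e_k)$, and let $b$ be the inner product making $(e_1, \ldots, e_p)$ orthonormal. The decomposition $\RR^p = E_1 \oplus \cdots \oplus E_p$ is then $b$-orthogonal and stable under $\phi_L(\Gamma)$; on each one-dimensional factor $E_k$, the element $\phi_L(\gamma)$ with $\gamma = (m_1,\ldots,m_n)$ acts as multiplication by the real scalar $\prod_{l=1}^n \lambda_k(A_l)^{m_l}$, which is automatically a $b|_{E_k}$-similarity (of ratio the absolute value of that scalar). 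This establishes $(J_1)$, the associated morphism $\Lambda : \Gamma \to (\RR_+^*)^p$ having $k$-th component $\gamma \mapsto \prod_l |\lambda_k(A_l)|^{m_l}$. For $(J_2)$, observe that $O(E_1, b|_{E_1}) = \{\pm 1\}$; since $\langle \lambda_1(A_1), \ldots, \lambda_1(A_n)\rangle$ has rank $n \ge 1$, some $|\lambda_1(A_l)| \ne 1$, so $\phi_L(\Gamma)|_{E_1} \not\subset \{\pm 1\}$ and $(J_2)$ holds.

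Finally, Proposition~\ref{geneconst} applies and produces an LCP structure on $Q((S^1)^n, \phi)$ of rank $\mathrm{rk}(\Lambda_1(\Gamma))$. Here $\Lambda_1(\Gamma) = \langle |\lambda_1(A_1)|, \ldots, |\lambda_1(A_n)|\rangle$, which has the same rank as $\langle \lambda_1(A_1), \ldots, \lambda_1(A_n)\rangle$ — namely $n$ — because the absolute value map $\RR^* \to \RR_+^*$ has kernel $\{\pm 1\}$ and therefore preserves the rank. Hence the rank is exactly $n$, and as $n$ was arbitrary the rank of an LCP manifold can be any positive integer. I expect no serious obstacle: the entire difficulty has been front-loaded into Lemma~\ref{dmatrix} (the number-theoretic production of codiagonalizable integer matrices with multiplicatively independent eigenvalues) and Proposition~\ref{geneconst}, so the present argument is largely bookkeeping; the one point demanding care is the rank identity $\mathrm{rk}(\Lambda_1(\Gamma)) = n$, where one must control the signs of the real eigenvalues.
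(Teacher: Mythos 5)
Your proof is correct and follows essentially the same route as the paper's: take $b$ to be the inner product making the common eigenbasis of Lemma~\ref{dmatrix} orthonormal, check $(J_1)$ and $(J_2)$ for the decomposition of $\RR^p$ into the eigenlines $E_k$, and apply Proposition~\ref{geneconst}, whose rank formula yields rank $n$. You merely spell out details the paper leaves implicit — the injectivity of $\phi$ via the rank-$n$ group $\langle \lambda_1(A_1), \ldots, \lambda_1(A_n) \rangle$, the sign/absolute-value bookkeeping in the rank identity, and the $n = 1$ edge case, where Lemma~\ref{dmatrix} is formally stated only for $n \ge 2$.
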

\begin{proof}
We keep the notations of Lemma~\ref{dmatrix} in this proof. Let $\mathcal B$ be a basis adapted to the decomposition $E_1 \oplus \ldots \oplus E_p$ and let $b$ be the symmetric, positive definite bilinear form for which $\mathcal B$ is orthonormal. Then, the conditions $(J_1)$ and $(J_2)$ are satisfied, so by Proposition~\ref{geneconst} $Q ((S^1)^n, \phi)$ carries an LCP structure of rank $n$.
\end{proof}

\begin{example}
We can make an explicit computation of the matrices given by Lemma~\ref{dmatrix} in the case $n = 2$ by following the constructive approach of the proof. Taking $m = 7$ in the proof of Lemma~\ref{exfield} shows that $K := \QQ [2 \cos(\frac{2 \pi}{7})]$ is a totally real, monogenic, cyclic extension of $\QQ$ of degree $p = 3$. From now on, we denote by $\alpha := 2 \cos(\frac{2 \pi}{7})$. From \cite[Proposition 2.16]{Wash}, one has $\mathcal O_K = \ZZ [\alpha]$. The minimal polynomial of $\alpha$ is $X^3 + X^2 - 2 X - 1$, and its conjugates are $2 \cos(\frac{4 \pi}{7})$ and $2 \cos(\frac{6 \pi}{7})$. Let $\sigma$ be the automorphism of $K$ such that $\sigma (\alpha) = 2 \cos(\frac{4 \pi}{7})$. Then $\sigma^2 (\alpha) = 2 \cos(\frac{6 \pi}{7})$ and $\sigma^3 = \mathrm{id}_K$.

We claim that the (multiplicative) group $\langle \alpha, \sigma(\alpha) \rangle$ has rank $2$. Indeed, if there were $a,b \in \ZZ$ such that $\alpha^a = \sigma (\alpha)^b$,then the two vectors of $\RR^3$ given by
\begin{align*}
X_1 := (\ln \vert \alpha \vert, \ln \vert \sigma (\alpha) \vert, \ln \vert \sigma^2 (\alpha) \vert), &&
X_2 := (\ln \vert \sigma (\alpha) \vert, \ln \vert \sigma^2 (\alpha) \vert, \ln \vert \alpha \vert)
\end{align*}
would be collinear. But $X_1$ and $X_2$ have the same norm for the standard Euclidean metric in $\RR^3$ because the coefficients of $X_2$ are a permutation of the ones of $X_1$, so they are collinear if and only if $X_1 = \pm X_2$. But this is false because $\cos(\frac{2 \pi}{7}) \neq \cos(\frac{4 \pi}{7})^{\pm 1}$.

Now, we have the equality $\sigma (\alpha) = \alpha^2 - 2$. Thus, we consider the companion matrix of the minimal polynomial of $\alpha$:
\begin{equation}
A_1 := \left(
\begin{matrix}
0 & 0 & 1 \\
1 & 0 & 2 \\
0 & 1 & -1
\end{matrix}
\right),
\end{equation}
and the matrix
\begin{equation}
A_2 := A_1^2 - 2 \mathrm{I}_3 = \left(
\begin{matrix}
- 2 & 1 & -1 \\
0 & 0 & -1 \\
1 & -1 & 1
\end{matrix}
\right).
\end{equation}
One easily checks that eigenvectors corresponding to the eigenvalues $\alpha, \sigma(\alpha), \sigma^2 (\alpha)$ of $A_1$ can be taken respectively as
\begin{equation}
x_1 = \left(
\begin{matrix}
1 \\
\alpha + \alpha^2 \\
\alpha
\end{matrix}
\right), \quad
x_2 = \left(
\begin{matrix}
1 \\
\sigma(\alpha) + \sigma(\alpha)^2 \\
\sigma(\alpha)
\end{matrix}
\right), \quad
x_3 = \left(
\begin{matrix}
1 \\
\sigma^2(\alpha) + \sigma^2(\alpha)^2 \\
\sigma^2(\alpha)
\end{matrix}
\right),
\end{equation}
and they are eigenvectors of $A_2$ for the eigenvalues $\sigma (\alpha), \sigma^2 (\alpha), \alpha$ respectively.

Using these matrices, we can now give the explicit construction of an LCP manifold of rank $2$ following Proposition~\ref{rankLCP} and Proposition~\ref{geneconst}. On the manifold $\Tilde M := \RR^3 \times (\RR_+^*)^2$, the group
\begin{equation}
G := D \rtimes \langle (A_1, (\vert \alpha \vert, 1)), (A_2, (1, \vert \sigma (\alpha) \vert)) \rangle
\end{equation}
acts freely, properly discontinuously and cocompactly (here the group $D$ is defined as in Section~\ref{genconstruction}, as the group of translations $\ZZ^3$ acting on $\RR^3$). Let $(t_1, t_2)$ be the canonical coordinates of $(\RR_+^*)^2$. We define the metric
\begin{equation}
h_{x,t} := d x_1^2 + \phi_2(t_1,t_2)^2 d x_2^2 + \phi_3 (t_1, t_2)^2 d x_3^2 + t_2^2 d t_1^2 + t_1^2 d t_2^2.
\end{equation}
where
\begin{align}
\phi_2 (t_1, t_2) := \left\vert \frac{\alpha}{\sigma (\alpha)} \right\vert^{\ln (t_1) / \ln (\vert \alpha \vert)} \left\vert \frac{\sigma(\alpha)}{\sigma^2 (\alpha)} \right\vert^{\ln (t_2) / \ln (\vert \sigma(\alpha) \vert)} \\ \phi_3 (t_1, t_2) := \left\vert \frac{\alpha}{\sigma^2 (\alpha)} \right\vert^{\ln (t_1) / \ln (\vert \alpha \vert)} \left\vert \frac{\sigma(\alpha)}{\alpha} \right\vert^{\ln (t_2) / \ln ( \vert \sigma(\alpha) \vert)}.
\end{align}
The manifold $M := \Tilde M / G$ admits a non-Riemannian similarity structure given by $h$, which in turn defines an LCP structure of rank $2$ on $M$.
\end{example}

\section{Some open questions}

Some questions arise naturally from the analysis and the discussions done in the previous sections. We make here a non-exhaustive list of such ones, whose answers would lead to a better understanding of LCP manifolds. Throughout this section, we will use the notations of Section~\ref{foliation}.

First of all, it was noticed by Kourganoff \cite[Theorem 1.9]{Kou} that the dimension of the closures of the leaves, which are the elements of $\Bar{\mathcal F}$ in the setting of Theorem~\ref{Kou1.9}, may vary. However, in all the examples given in this article, this dimension is constant, so we ask the following:
\begin{itemize}
\item In the setting of Theorem~\ref{Kou1.9}, do all the elements of $\Bar{\mathcal F}$ have the same dimension?
\end{itemize}

We can propose a strategy to answer this first question. Indeed, assume that $\Bar{P}^0$ is simply connected, i.e. it is isomorphic to the group $\RR^t$ for some $t \in \NN$. Then, since the group $\Gamma_0$ is a full lattice in $\RR^q \times \Bar{P}^0 \simeq \RR^{q + t}$, the group $\Gamma_0$ is of rank $q + t$. In addition, for any $x \in N$ (the non-flat part), the closed leaf $\Bar {\mathcal F}_x = \pi (\RR^q \times {\Bar P}^0 x)$ has the same dimension as $\RR^q \times {\Bar P}^0 x$. As we already saw, this last manifold is isomorphic to the product of an Euclidean space with a flat torus so it is a Lie group, and $\Gamma_0$ acts freely and properly discontinuously on it. Consequently, $\Gamma_0 (\{ (0,x) \})$ is a lattice of $\RR^q \times {\Bar P}^0 x$ with rank equal to $q + t$. Thus
\begin{equation}
q + t = \mathrm{rank} (\Gamma_0) \le \dim (\RR^q \times {\Bar P}^0 x) \le q +t,
\end{equation}
and these inequalities turn out to be equalities, so $\Bar {\mathcal F}_x$ has dimension $q + t$. This leads to the following question, whose answer is positive in all the examples:
\begin{itemize}
\item Is the group $\Bar{P}^0$ simply connected, or equivalently is it isomorphic to $\RR^t$ for some $t \in \NN$?
\end{itemize}

In order to have a better understanding of the group $P$, we should specify how it acts on $N$. In \cite[Lemma 4.17]{Kou}, it was shown that $P$ acts freely on $N$, but the proof proposed seems incorrect, even if it does not modify the correctness of the rest of the article. The only result we can obtain is the one of Lemma~\ref{isomPpi}, stated previously. We thus ask:
\begin{itemize}
\item Does $P$ acts freely on $N$? If this is true, does $\Bar P$ acts freely on $N$?
\end{itemize}

In Section~\ref{genconstruction}, we have given a general construction to obtain LCP manifolds. Nevertheless, some points remain imprecise:
\begin{itemize}
\item What are the acceptable choices for the morphism $\phi$, given a compact manifold $\mathcal N$ (even without asking for conditions $(J_1)$ and $(J_2)$)?
\item Can we weaken conditions $(J_1)$ and $(J_2)$?
\end{itemize}

Finally, we remark that the only known LCK manifolds which are also LCP are the OT-manifolds for $t = 1$. A natural way to construct new examples would be to take extensions of OT-manifolds (see Definition~\ref{extensionDef}).
\begin{itemize}
\item Can an extension of an LCP manifold which is also LCK be an LCK manifold?
\item Are the OT-manifolds with $t = 1$ the only LCP manifolds which are also LCK?
\end{itemize}

\section*{Statements and Declarations}

The author has no relevant financial or non-financial interest to disclose.

Data sharing is not applicable to this article as no datasets were generated or analyzed during the current study. 

\renewcommand{\refname}{\bf References}

\end{document}